\theoremstyle{thmit} 
\newtheorem{theorem}{Theorem}[section]
\newtheorem{lemma}[theorem]{Lemma}
\newtheorem{corollary}[theorem]{Corollary}
\newtheorem{proposition}[theorem]{Proposition}
\theoremstyle{thmrm} 
\newtheorem{example}{Example}
\newtheorem*{remark}{Remark}
\newtheorem*{oldproof}{Proof}
\renewenvironment{proof}[1][{}]{\begin{oldproof}[#1]}{\qed\end{oldproof}}
\newtheorem{definition}[theorem]{Definition}
\numberwithin{equation}{section}
\title{Travel groupoids on complete multipartite graphs}
\author{Diogo Kendy Matsumoto}
\address{Center for Arts and Sciences,\\ TEIKYO University of Science,\\ 2525 Yatsusawa, Uenohara, Yamanashi 409-0193. Japan.}
\email{diogo-swm@ntu.ac.jp}
\keywords{self-centered graph, diameter, radius, unique eccentric point, girth} 
\subjclass{20N02, 05C05, 05C30}
\begin{document}
\maketitle

\begin{abstract}
A travel groupoid is an algebraic system satisfying two suitable conditions, which has a relation to graphs.
In this article, we characterize travel groupoids on finite complete multipartite graphs, 
and we give the numbers of travel groupoids on the complete multipartite graphs.
\end{abstract}

\section{Introduction}\label{sec1}
The conception of travel groupoids was introduced by Nebesk\'y in 2006 \cite{Neb2006}, 
as a generalization of his study about an algebraic characterization of geodetic graphs \cite{Neb98,Neb02} and trees \cite{Neb00}.
A geodetic graph signifies a connected graph having exactly one shortest path between any two vertices.

In this article, graphs are finite, undirected and have no multiple edges or loops.
For a graph $G$, $V(G)$ denotes the vertex set of $G$ and $E(G)$ denotes the edge set of $G$.

\begin{definition}
A groupoid is a pair $(V,*)$ of a nonempty set $V$ and a binary operation $*:V\times V\rightarrow V$.
\end{definition}

\begin{definition}\label{travel groupoid}{\bf (Travel groupoid)}
A groupoid $(V,*)$ is called a \textit{travel groupoid} if it satisfies the following two conditions:\\
{\rm (t1)}\ $(u*v)*u=u$ (for all $u, v\in V$);\\
{\rm (t2)}\ if $(u*v)*v=u$, then $u=v$ (for all $u, v\in V$).
\end{definition}

Let $(V, *)$ be a travel groupoid.
{\it The associated graph} of $(V,*)$ is a graph $G_{V}$ defined by a vertex set $V(G_V)=V$ and an edge set
\[
 E(G_V)=\{ \{ u,v\} | u,v\in V\ \mbox{and}\  u\neq u*v=v \}.
\]
For a graph $G$, we say that $(V, *)$ is {\it on} $G$ or that $G$ {\it has} $(V, *)$ if $G = G_{V}$.
Note that, every travel groupoid is on exactly one graph, 
but many different travel groupoids may be on the same graph.

In \cite{Neb2006}, Nebesk\'y defined some classes of travel groupoids and proposed three questions.
In recent years, some of these questions are solved in \cite{CPS2,MM16}.
Furthermore, a characterization of an important class called a non-confusing travel groupoid are given in \cite{CPS1} by using spanning trees.
%
For more studies, we refer to \cite{CP19,CPS3,MM19}.

The aim of this article is to study about travel groupoids on a complete multipartite graph.
A complete multipartite graph $G$ with $l$ parts is a graph whose vertices $V(G)$ are partitioned into different independent sets $V_1,\cdots ,V_l$, and having an edge between every pair of vertices from different independent sets.
When $V_i$ has $n_i$ vertices ($1\leq i\leq l$), we denote the complete multipartite graph by $K_{n_{1},\cdots ,n_{l} }$.
Especially, a complete multipartite graph with $2$ parts is called a complete bipartite graph.

\begin{remark}
The complete multipartite graph includes some special classes.
For examples, 
the edgeless graph is a complete multipartite graph with one independent set, 
and the complete graph is a complete multipartite graph whose independent sets consists of one vertex.
\end{remark}

The organization of this article is as follows.
In section 2, we review some definitions and results about travel groupoids.
In section 3, we give a characterization of travel groupoids on a complete multipartite graph. Furthermore, we characterize some special classes of complete multipartite graphs.
In section 4, we give the numbers of travel groupoids on complete multipartite graphs.

\section{Preliminaries}\label{sec2}

\subsection{Basic results}

\begin{proposition}[\cite{Neb2006}]\label{tg-pro}
Let $(V,*)$ be a travel groupoid. Then the following relations hold.
\begin{enumerate}
 \item $u*u=u$ (for all $u\in V$),
 \item $u*v=v$ if and only if $v*u=u$ (for all $u,v\in V$),
 \item $u*v=u$ if and only if $u=v$ (for all $u,v\in V$),
 \item $u*(u*v)=u*v$ (for all $u,v\in V$).
\end{enumerate} 
\end{proposition}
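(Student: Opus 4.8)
The plan is to establish the four relations essentially in the stated order, using only the axioms (t1) and (t2) together with earlier parts of the proposition as they become available. The only step that needs a genuine idea is the idempotence law (1); once it is in hand, the remaining three relations follow by short substitutions.

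For (1), I would set $w := u*u$. Applying (t1) with $v=u$ gives $(u*u)*u=u$, that is, $w*u=u$. The point is now to feed this into (t2): computing $(w*u)*u = u*u = w$, so the pair $(w,u)$ satisfies the hypothesis $(a*b)*b=a$ of (t2) with $a=w$, $b=u$. Hence (t2) forces $w=u$, i.e. $u*u=u$. I expect this to be the main obstacle, since it is the one place where one must guess the correct pair to substitute into (t2) rather than simply read off its conclusion; every other relation is a direct manipulation.

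For (2), observe that if $u*v=v$, then substituting into (t1) gives $v*u=(u*v)*u=u$, and the converse is this same implication with $u$ and $v$ interchanged, so the equivalence holds. For (3), the backward direction is immediate from (1), since $u=v$ gives $u*v=u*u=u$; the forward direction uses (t2) again: if $u*v=u$ then $(u*v)*v=u*v=u$, so (t2) applied to the pair $(u,v)$ yields $u=v$. Finally, for (4) I would put $w:=u*v$; by (t1) we have $w*u=(u*v)*u=u$, and then relation (2), already proved, converts $w*u=u$ into $u*w=w$, which is exactly $u*(u*v)=u*v$.
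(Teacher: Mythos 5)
Your proof is correct. The paper itself gives no proof of this proposition — it is quoted from Nebesk\'y's original article \cite{Neb2006} — so there is nothing to compare against; your derivation (idempotence via applying (t2) to the pair $(u*u,\,u)$, then (2)–(4) by direct substitution into (t1), (t2) and the already-established parts) is the standard argument and each step checks out.
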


\begin{proposition}[\cite{Neb2006}]\label{tg-pro2}
Let $(V,*)$ be a travel groupoid on a graph $G$ and let $u,v\in V(u\neq v)$. 
Then $u$ and $u*v$ are adjacent vertices of $G$. 
\end{proposition}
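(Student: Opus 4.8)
The plan is to unwind the definition of the associated graph $G_V$ (which coincides with $G$, since $(V,*)$ is on $G$) and check directly that the pair $\{u, u*v\}$ satisfies the edge condition appearing in $E(G_V)$, namely $x \neq x*y = y$. Writing $w := u*v$ for brevity, the task reduces to verifying two things: that $u*w = w$, and that $u \neq w$. Both should follow immediately from Proposition \ref{tg-pro}.

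First I would establish the relation $u*w = w$. Computing, $u*w = u*(u*v)$, and by Proposition \ref{tg-pro}(4) we have $u*(u*v) = u*v = w$. Thus $u*w = w$, which is precisely the ``$x*y = y$'' half of the edge condition with $x = u$ and $y = w$. Next I would rule out $u = w$. Suppose, for contradiction, that $u = u*v$; then $u*v = u$, and Proposition \ref{tg-pro}(3) states that $u*v = u$ holds if and only if $u = v$. Since we are assuming $u \neq v$, this is a contradiction, and hence $u \neq w$. Combining the two facts, the pair $\{u, u*v\}$ satisfies $u \neq u*w = w$, so it belongs to $E(G_V) = E(G)$; that is, $u$ and $u*v$ are adjacent vertices of $G$.

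There is no genuinely hard step here: the statement is essentially a one-line consequence of parts (3) and (4) of Proposition \ref{tg-pro}. The only point meriting a moment's care is confirming that the edge condition is being applied in the correct orientation—here with $u$ playing the role of the first coordinate and $u*v$ the role of the second—but Proposition \ref{tg-pro}(2) shows this condition is symmetric in the two endpoints, so the orientation is immaterial and the argument is complete.
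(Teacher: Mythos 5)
Your argument is correct: the paper itself states this proposition without proof (citing Nebesk\'y), and your direct verification via parts (3) and (4) of Proposition \ref{tg-pro} is exactly the standard one-line argument, correctly checking both halves of the edge condition $u \neq u*(u*v) = u*v$. Nothing is missing.
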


Let $(V,*)$ be a travel groupoid.
Then, for  $u, v\in V$, we define the following notation
\[
u*^{0}v=u
\] 
and 
\[
u*^{i+1}v=(u*^{i}v)*v
\] 
for all integer $i\geq 0$.

\begin{proposition}[\cite{Neb2006}]
Let $(V,*)$ be a travel groupoid on a graph $G$. 
For $u,v\in V$, and integer $k\geq 1$, assume that $u*^{k-1}v\neq v$. 
Then the sequence of vertices
\begin{equation}\label{walk-01}
 u=u*^{0}v, u*v,\cdots ,u*^{k-1}v , u*^{k}v
\end{equation}
is a walk in $G$.
Moreover, if $u*^{k}v=v$, then the sequence (\ref{walk-01}) is an $u$-$v$ path in $G$.
\end{proposition}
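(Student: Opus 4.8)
The plan is to separate the two assertions and dispose of the walk claim first, since it follows almost immediately from Proposition~\ref{tg-pro2}. Writing $w_i = u*^{i}v$ for $0\le i\le k$, the first thing I would establish is that none of $w_0,\dots,w_{k-1}$ equals $v$. Indeed, if $w_i=v$ for some $i$ with $0\le i\le k-1$, then by Proposition~\ref{tg-pro}(1) we would have $w_{i+1}=w_i*v=v*v=v$, and iterating forces $w_{k-1}=v$, contradicting the hypothesis $u*^{k-1}v\ne v$. Hence $w_i\ne v$ for every $i\le k-1$, and Proposition~\ref{tg-pro2} applied to the pair $(w_i,v)$ shows that $w_i$ and $w_i*v=w_{i+1}$ are adjacent in $G$ for each such $i$. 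Thus consecutive terms of (\ref{walk-01}) are adjacent, which is exactly the statement that the sequence is a walk.

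For the ``moreover'' part I would assume $w_k=v$ and prove by induction on $k$ that the vertices $w_0,\dots,w_k$ are pairwise distinct, which upgrades the walk to a path. The base case $k=1$ is immediate: the sequence is $u,\,u*v=v$, and $u\ne v$ because $w_0=u\ne v$. For the inductive step ($k\ge 2$) I would apply the induction hypothesis to the shifted data $u'=u*v=w_1$, the same $v$, and exponent $k-1$: here $u'*^{i}v=w_{1+i}$, so $u'*^{k-2}v=w_{k-1}\ne v$ and $u'*^{k-1}v=w_k=v$, and the hypothesis yields that $w_1,\dots,w_k$ are pairwise distinct.

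It then remains to show that $w_0$ is distinct from each of $w_1,\dots,w_k$, and this is where the real work lies. The key observation is that the map $x\mapsto x*v$ is a function, so any coincidence $w_0=w_j$ propagates: it forces $w_1=w_0*v=w_j*v=w_{j+1}$. I would first note $w_0\ne w_1$, since adjacent vertices of a loopless graph are distinct. For $j\ge 2$, suppose $w_0=w_j$. If $j<k$, then $w_1=w_{j+1}$ with $1\ne j+1$ and both indices in $\{1,\dots,k\}$, contradicting the distinctness of $w_1,\dots,w_k$ just obtained; if $j=k$, then $w_0=w_k=v$, contradicting $w_0\ne v$. Either way we reach a contradiction, so $w_0\notin\{w_1,\dots,w_k\}$ and all the vertices are distinct.

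The main obstacle is precisely this last distinctness argument: ruling out \emph{long-range} repetitions in the sequence rather than merely between nearby terms. The mechanism that makes it work is the determinism of the map $x\mapsto x*v$, which converts a single repeat $w_0=w_j$ into the repeat $w_1=w_{j+1}$ and so lets the inductively established path structure of the tail $w_1,\dots,w_k$ do the work; the two travel-groupoid axioms enter only indirectly, through Propositions~\ref{tg-pro} and \ref{tg-pro2}. I would take particular care over the edge case $j=k$, where the contradiction comes from $w_k=v$ rather than from the tail, and over the base case, to be sure the induction closes without circularity.
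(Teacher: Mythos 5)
This proposition is quoted from Nebesk\'y \cite{Neb2006} and the paper gives no proof of it, so there is nothing internal to compare against; judged on its own, your argument is correct and complete. The walk part is handled exactly as one would expect: the hypothesis $u*^{k-1}v\neq v$ propagates backwards (via $v*v=v$ from Proposition \ref{tg-pro}) to give $u*^{i}v\neq v$ for all $i\le k-1$, and Proposition \ref{tg-pro2} then supplies the adjacency of consecutive terms. The path part is the genuinely nontrivial half, and your induction is sound: the shifted data $u'=u*v$ satisfies the hypotheses for exponent $k-1$, so the tail $w_1,\dots,w_k$ is a path by the inductive hypothesis, and the determinism of $x\mapsto x*v$ converts any putative coincidence $w_0=w_j$ into either a repeat inside the tail (for $j<k$) or the forbidden equality $w_0=v$ (for $j=k$). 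The only point worth polishing is the claim $w_0\neq w_1$: rather than appealing to looplessness of $G$, it is cleaner to cite Proposition \ref{tg-pro}(3), which gives $w_0*v\neq w_0$ directly from $w_0\neq v$, keeping the argument purely algebraic. Note also that your proof nowhere uses axiom (t2) beyond what is packaged in Propositions \ref{tg-pro} and \ref{tg-pro2}; that is consistent with the statement being a formal consequence of those two cited results.
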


\begin{theorem}[\cite{Neb2006}]
Let $G$ be a finite graph. 
Then $G$ has a travel groupoid if and only if $G$ is connected 
or $G$ is disconnected and no component of $G$ is a tree.
\end{theorem}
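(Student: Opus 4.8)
The plan is to prove both implications, handling sufficiency by an explicit construction and necessity by a walk argument that exploits conditions (t1) and (t2). For sufficiency, suppose first that $G$ is connected. Then I would equip $G$ with a ``geodesic'' operation: fix, for every ordered pair $(u,v)$ with $u\neq v$, a neighbour $u*v$ of $u$ lying on some shortest $u$-$v$ path, and set $u*u=u$. Condition (t1) then holds because $u*v$ is adjacent to $u$, so stepping back toward $u$ is one step; condition (t2) holds because a single step toward $v$ strictly decreases the distance to $v$, so $(u*v)*v$ sits two units closer to $v$ than $u$ and cannot equal $u$ unless $u=v$. Finally $G_V=G$, since $u*v=v$ exactly when $u$ and $v$ are adjacent. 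This already covers the connected case (including connected trees).

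For a disconnected $G$ with no tree component, I would define $*$ by the geodesic rule inside each component and, for $u,v$ in distinct components $A\neq B$, by $u*v=g_A(u)$, where $g_A\colon A\to A$ sends each vertex to a neighbour of itself. By Proposition~\ref{tg-pro2} every value $u*v$ must be adjacent to $u$, so the assignment is forced to stay inside $A$; the orbit never reaches $v$, which is why no new edges appear and $G_V=G$. The delicate point is (t2): it demands $g_A(g_A(u))\neq u$ for all $u$, i.e. the functional graph of $g_A$ has no $2$-cycle. Here the hypothesis enters: such a $g_A$ exists if and only if $A$ contains a cycle. Indeed, following $g_A$ from any vertex must close up into a cycle of the functional graph, which is a genuine cycle of $A$; a tree forces this cycle to have length at most $2$, hence a backtrack, whereas if $A$ has a cycle $C$ I can orient $C$ cyclically and push every remaining vertex one step toward $C$, producing a $g_A$ whose only closed orbit is $C$.

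For necessity, I would argue by contrapositive: assume $(V,*)$ is a travel groupoid on $G$, that $G$ is disconnected, and that some component $A$ is a tree, seeking a contradiction. Choose $u\in A$ and $v$ in another component, and consider the sequence $a_i=u*^i v$. Since each step $a_{i+1}=a_i*v$ is adjacent to $a_i$ (Proposition~\ref{tg-pro2}) and $a_0=u\in A$, induction keeps every $a_i$ inside $A$, so $a_i\neq v$ for all $i$. Condition (t1) gives $a_{i+1}*a_i=a_i$, so consecutive terms are distinct and adjacent; condition (t2) gives $a_{i+2}\neq a_i$, since $a_{i+2}=a_i*^2 v=a_i$ would force $a_i=v$. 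Thus $(a_i)$ is an infinite non-backtracking walk in the finite tree $A$. A standard tree argument shows such a walk is a simple path (a first repetition would close a cycle of length $\geq 3$ in $A$), which is impossible in a finite tree. This contradiction establishes necessity.

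The main obstacle is the sufficiency construction across components: reconciling (t2) with the fact (forced by Proposition~\ref{tg-pro2}) that $u*v$ must remain a neighbour of $u$ even when $v$ is unreachable. This reduces precisely to producing a neighbour-assignment on each component with no induced $2$-cycle, and the clean equivalence between the existence of such an assignment and the presence of a cycle is exactly what matches the ``no tree component'' hypothesis. The necessity direction reuses the same dichotomy in reverse, turning the impossibility of an infinite non-backtracking walk in a tree into the obstruction.
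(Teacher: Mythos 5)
This theorem is quoted in the paper from Nebesk\'y's original article \cite{Neb2006} and is not proved there, so there is no in-paper argument to compare against; judged on its own, your proof is correct and complete. The geodesic construction handles the connected case and the intra-component pairs, and your reduction of the cross-component case to finding a neighbour-selection $g_A$ with no $2$-cycle in its functional graph is exactly the right pivot: the ``cyclic orientation of $C$ plus gradient flow toward $C$'' construction works when a cycle exists, and the observation that every functional-graph orbit in a tree must terminate in a $2$-cycle (length $1$ is excluded since $g_A(u)$ is a neighbour, length $\geq 3$ would be a cycle of $A$) gives the converse. The necessity direction is also sound: Proposition \ref{tg-pro2} confines the sequence $a_i=u*^iv$ to the tree component, (t1) and (t2) make it a non-backtracking infinite walk, and a first repetition $a_j=a_k$ would close a cycle of length $k-j\geq 3$ in the tree. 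One small point worth making explicit in a final write-up: in the verification of (t2) for the geodesic rule, the case $d(u,v)=1$ should be separated (there $(u*v)*v=v\neq u$ rather than ``two units closer''), as you implicitly do.
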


\begin{remark}\label{rem-exists}
This theorem implies that there is no travel groupoid on an edgeless graph $G$ except the case $|V(G)|=1$.
\end{remark}

\subsection{Non-confusing travel groupoids}

\begin{definition}
Let $(V,*)$ be a travel groupoid and $u,v\in V (u\neq v)$.
\begin{enumerate}
\item {\bf (Confusing pair)} An ordered pair $(u,v)$ is called a \textit{confusing pair} in $(V,*)$, if there exists $i\geq 3$ such that $u*^{i}v=u$.
\item {\bf (Non-confusing)}  If $(V,*)$ has no confusing pair, then we call $(V,*)$ a \textit{non-confusing} travel groupoid.
\end{enumerate}
\end{definition}

\begin{proposition}{\rm (\cite{Neb2006})}\label{NC-prop1}
Let $(V,*)$ be a finite travel groupoid on a graph $G$. 
Then $(V,*)$ is non-confusing if and only if the following statement holds for all $u,v\in V (u\neq v)$:\\
\quad there exists $k\geq 1$ such that the sequence of vertices
\[
 u=u*^{0}v, u*v,\cdots ,u*^{k}v=v
\]
\quad is an $u$-$v$ path in $G$.
\end{proposition}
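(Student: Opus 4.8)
The plan is to study, for each fixed $v\in V$, the unary map $f_v\colon V\to V$ given by $f_v(x)=x*v$, so that $u*^{i}v=f_v^{\,i}(u)$. By Proposition~\ref{tg-pro}(1) the vertex $v$ is a fixed point of $f_v$, and it is \emph{absorbing}: once $u*^{k}v=v$ we get $u*^{k+1}v=v*v=v$, and hence $u*^{m}v=v$ for all $m\ge k$. I would also record at the outset the observation, supplied by the walk proposition stated above, that as soon as some iterate satisfies $u*^{k}v=v$ the resulting sequence is automatically a $u$-$v$ path. Thus the path-ness in the displayed condition comes for free, and the condition is equivalent to the purely dynamical statement: for all $u\ne v$ the orbit $u, f_v(u), f_v^{2}(u),\dots$ eventually reaches $v$.

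For the direction ``path condition $\Rightarrow$ non-confusing'' I would argue by contraposition. Suppose $(u,v)$ is a confusing pair, so $u*^{i}v=u$ for some $i\ge 3$ with $u\ne v$. Then the orbit of $u$ under $f_v$ returns to its starting point, so it is purely periodic and its range is the finite set $\{u,\,u*v,\dots,u*^{i-1}v\}$. If $v$ lay in this range, say $u*^{j}v=v$ with $j<i$, then absorption would force $u*^{i}v=v\ne u$, a contradiction; hence the orbit never reaches $v$, and the path condition fails for $(u,v)$.

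For the converse I would again contrapose, and here finiteness of $V$ does the real work. Assume the orbit of some $u\ne v$ never reaches $v$. Since $V$ is finite, this orbit is eventually periodic and settles on a cycle $C$ of $f_v$; because $v$ is absorbing and is never reached, $v\notin C$. Choosing any $w\in C$ yields a vertex $w\ne v$ with $w*^{p}v=w$, where $p\ge 1$ is the length of $C$, and iterating this relation gives $w*^{mp}v=w$ for every $m\ge 1$. Picking $m$ large enough that $mp\ge 3$ exhibits $(w,v)$ as a confusing pair, so $(V,*)$ is confusing.

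The one genuinely delicate point, and the step I would write most carefully, is the interaction between the two ways an orbit of $f_v$ can behave: absorption at the fixed point $v$ versus recurrence to a starting vertex. Both arguments turn on the single fact that $v$ is an \emph{absorbing} fixed point, so that ``reaching $v$'' and ``recurring to a vertex $\ne v$'' are mutually exclusive; combined with finiteness of $V$ to guarantee eventual periodicity, this dichotomy is exactly what lets each hypothesis be converted into the negation of the other. I would also double-check the harmless-looking claim that a nontrivial cycle of $f_v$ has length at least $3$ — ruling out length $1$ via Proposition~\ref{tg-pro}(3) and length $2$ via axiom (t2) — since it confirms that the threshold $i\ge 3$ in the definition of a confusing pair is the natural one, even though the proof itself only needs a suitable multiple $mp$.
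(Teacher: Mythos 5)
Your argument is correct, but there is nothing in the paper to compare it against: Proposition~\ref{NC-prop1} is quoted from \cite{Neb2006} and the paper supplies no proof of it, so your write-up stands or falls on its own. On its own it stands. The reduction of the path condition to the purely dynamical statement ``the $f_v$-orbit of $u$ reaches $v$'' is legitimate, since the unnumbered walk proposition in Section~\ref{sec2} (applied with $k$ the first hitting time, which satisfies $u*^{k-1}v\neq v$) gives path-ness for free, and absorption of the fixed point $v$ follows from $v*v=v$ in Proposition~\ref{tg-pro}. Both contrapositive directions are sound: a confusing pair forces the orbit of $u$ to be purely periodic with range $\{u*^{0}v,\dots,u*^{i-1}v\}$, and absorption excludes $v$ from that range; conversely, finiteness forces an orbit missing $v$ into a cycle $C$ with $v\notin C$, and any $w\in C$ with period $p$ yields $w*^{mp}v=w$ for $mp\geq 3$, hence a confusing pair $(w,v)$. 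Your closing observation that such a cycle in fact has length at least $3$ (length $1$ is excluded by Proposition~\ref{tg-pro}(3), length $2$ by (t2)) is not needed but correctly explains why the threshold $i\geq 3$ in the definition of a confusing pair is the right one. The only stylistic caveat is that you lean on the walk proposition, which the paper also states without proof; if a fully self-contained argument were wanted, you would need to verify separately that consecutive iterates are adjacent (Proposition~\ref{tg-pro2}) and that the first-hitting segment has no repeated vertices, which follows from the same absorption-versus-recurrence dichotomy you already isolate.
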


\begin{definition}\label{v-tree}{\bf ($v$-spanning tree)}
Let $G$ be a graph and $v \in V(G)$.
We say that a spanning tree $T$ of $G$ is a $v$-spanning tree if $T$ contains all incident edges of the vertex $v$.
We write $S_{G}(v)$ for the set of all $v$-trees in $G$.
\end{definition}
\begin{theorem}{\rm (\cite{CPS1})}\label{nc-tree}
Let $G$ be a finite connected graph.
Then, there exists a one-to-one correspondence between the set $\prod_{v\in V(G)}S_{G}(v)$ 
and the set of all non-confusing travel groupoids on $G$.
\end{theorem}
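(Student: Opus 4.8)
The plan is to exhibit the bijection explicitly and in both directions, the key geometric idea being that for a non-confusing travel groupoid the map $u \mapsto u*v$ assigns to each $u \neq v$ its \emph{successor} on the unique path toward $v$, and that this successor relation is precisely the parent relation of a spanning tree rooted at $v$.

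First I would construct the forward map. Fix a non-confusing travel groupoid $(V,*)$ and a vertex $v$, and define a set of edges $T_v = \{\{u, u*v\} : u \in V,\ u \neq v\}$. By Proposition~\ref{tg-pro}(3) we have $u*v \neq u$ for $u \neq v$, so by Proposition~\ref{tg-pro2} each $\{u, u*v\}$ is a genuine edge of $G$. Non-confusingness (Proposition~\ref{NC-prop1}) guarantees that iterating $u \mapsto u*v$ reaches $v$ after finitely many steps, so every vertex is joined to $v$ within $T_v$; hence $T_v$ is connected, and since it is defined by at most $|V|-1$ edges it must be a spanning tree. To see that $T_v \in S_G(v)$, that is, that $T_v$ contains every edge incident to $v$, note that if $\{v,w\} \in E(G)$ then the definition of the associated graph together with Proposition~\ref{tg-pro}(2) yields $w*v = v$, so $\{w,w*v\} = \{v,w\} \in T_v$. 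Thus $(V,*) \mapsto (T_v)_{v \in V(G)}$ is a well-defined map into $\prod_{v} S_G(v)$.

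Next I would construct the reverse map. Given $(T_v)_{v \in V(G)}$ with $T_v \in S_G(v)$, define $u*u = u$ and, for $u \neq v$, let $u*v$ be the neighbor of $u$ on the unique $u$-$v$ path in $T_v$ (equivalently, the parent of $u$ when $T_v$ is rooted at $v$), and then verify the travel groupoid axioms. For (t1), write $w = u*v$; since $\{u,w\}$ is an edge of $T_v \subseteq G$, it is an edge of $G$ incident to $u$, hence an edge of the $u$-spanning tree $T_u$, and as $T_u$ is rooted at $u$ the parent of $w$ in $T_u$ is $u$, giving $(u*v)*u = w*u = u$. For (t2), if $u \neq v$ then either $u*v = v$, in which case $(u*v)*v = v \neq u$, or $u*v$ is a proper ancestor of $u$ in $T_v$, in which case so is $(u*v)*v$, and an ancestor of $u$ cannot equal $u$; either way $(u*v)*v = u$ forces $u = v$. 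Finally, $u*w = w$ holds exactly when $\{u,w\}$ is an edge of $T_w$, which by the $w$-spanning tree property holds exactly when $\{u,w\} \in E(G)$, so the associated graph is $G$; and since $*v$ climbs toward the root $v$, Proposition~\ref{NC-prop1} shows $(V,*)$ is non-confusing.

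The two maps are mutually inverse essentially by construction: building $T_v$ from $*$ and then reading off parents returns the same operation, since the parent of $u$ in $T_v$ is by definition $u*v$; and building $*$ from a family of trees and then forming the edge sets $\{u, u*v\}$ recovers each $T_v$. I expect the main obstacle to be the verification of (t1) in the reverse map: it is the one axiom that forces interaction between two \emph{different} trees $T_v$ and $T_u$, and it is exactly here that the defining property of a $v$-spanning tree, that it contains all edges incident to its root, is indispensable, since without it the neighbor $u*v$ of $u$ need not reappear as a neighbor of $u$ in $T_u$.
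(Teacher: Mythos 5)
Your proposal is correct and uses exactly the bijection the paper records after the theorem statement: $T_v$ is built from the edges $\{u,u*v\}$, and the inverse sends a family of $v$-spanning trees to the operation $u*v = A_{T_v}(u,v)$. The paper cites \cite{CPS1} for the proof and only states the correspondence, so your verifications of (t1), (t2), the associated graph, and non-confusingness simply fill in the details of the same argument.
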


Since this theorem plays an important role in the study of travel groupoids, 
we refer to the correspondence explicitly here.

Let $(V,*)$ be a non-confusing travel groupoid on a finite graph $G$.
Then, for each $v\in V$, the graph $T_{v}$ defined by
\[
 V(T_{v}):=V
\]
and
\[
 E(T_{v}):=\{ \{ u,u*v\} | u\in V, u\neq v \}
\]
is a $v$-spanning tree of $G$. 
Hence, $\{ T_{v}\}_{v\in V}\in \prod_{v\in V(G)}S_{G}(v)$.

For the converse correspondence, 
let $\{ T_{v}\}_{v\in V}\in \prod_{v\in V(G)}S_{G}(v)$ 
and let $A_{T_{v}}(u,v)$ be the vertex adjacent to $u$ which is on the unique $u$-$v$ path in the $v$-spanning tree $T_{v}$.
Define a binary operation on $V=V(G)$ as follows:
\[
u*v=
\begin{cases}
A_{T_{v}}(u,v)  &  u\neq v \\
u               &  u=v
\end{cases}
\]
Then $(V,*)$ is a non-confusing travel groupoid on $G$.\\


\begin{corollary}{\rm (\cite{CPS1})}\label{nc-number}
Let $G$ be a finite connected graph. 
Then, the number of non-confusing travel groupoids on $G$ is equal to $\prod_{v\in V (G)} \left| S_{G}(v) \right| $.
\end{corollary}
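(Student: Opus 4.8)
The plan is to obtain this count as an immediate consequence of the bijection furnished by Theorem \ref{nc-tree}. First I would record the finiteness that makes the counting meaningful: since $G$ is finite, the vertex set $V(G)$ is finite and each $S_G(v)$ is a finite set, because a finite graph has only finitely many spanning trees and hence only finitely many $v$-spanning trees. Consequently the product set $\prod_{v\in V(G)} S_G(v)$ is finite, and by the correspondence it matches a finite set of non-confusing travel groupoids.

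Next I would invoke Theorem \ref{nc-tree}, which asserts a one-to-one correspondence between $\prod_{v\in V(G)} S_G(v)$ and the set of all non-confusing travel groupoids on $G$. A bijection between two finite sets forces them to have equal cardinality, so the number of non-confusing travel groupoids on $G$ equals $\left| \prod_{v\in V(G)} S_G(v) \right|$.

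Finally I would apply the elementary fact that the cardinality of a finite Cartesian product is the product of the cardinalities of its factors, yielding $\left| \prod_{v\in V(G)} S_G(v) \right| = \prod_{v\in V(G)} \left| S_G(v) \right|$, which is exactly the claimed formula.

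Since each step is a direct appeal to Theorem \ref{nc-tree} combined with standard finiteness and product-counting facts, I do not expect any genuine obstacle here; the substance of the corollary is already contained in the bijection established above, and the only point requiring care is to make explicit that every set involved is finite, so that passing from the bijection to an equality of cardinalities, and then to a product of cardinalities, is fully justified.
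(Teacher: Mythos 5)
Your proposal is correct and matches the paper's intent exactly: the corollary is stated as an immediate consequence of the bijection in Theorem \ref{nc-tree}, and your argument (finiteness of each $S_G(v)$, equality of cardinalities under the bijection, and the product formula for a finite Cartesian product) is precisely the routine verification the paper leaves implicit. No issues.
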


\subsection{Simple travel groupoids}

\begin{definition}
{\bf (Simple)}  A travel groupoid $(V,*)$ is called simple if it satisfies the following condition:\\
{\rm (t3)}\ if $v*u\neq u$, then $u*(v*u)=u*v$ (for all $u,v\in V$).
\end{definition}

\begin{theorem}{\rm (\cite{Neb2006})}\label{NC-ex}
For every finite connected graph $G$ there exists a simple non-confusing travel groupoid on $G$.
\end{theorem}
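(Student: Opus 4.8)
The plan is to realize a simple non-confusing travel groupoid through the tree correspondence of Theorem \ref{nc-tree}, by producing a single, globally consistent system of shortest paths. Concretely, I would first fix a \emph{generic} weighting of the edges: enumerate $E(G)=\{e_1,\dots ,e_m\}$ and set $\omega(e_i)=2^{i}$, so that distinct subsets of edges receive distinct total weights. For an ordered pair $u\neq v$, let $P(u,v)$ denote the $u$-$v$ path with the fewest edges and, among those, the least total $\omega$-weight; genericity makes this path unique. Since both the edge-count and the weight of a path are insensitive to orientation, the family is \emph{symmetric} ($P(v,u)$ is the reversal of $P(u,v)$), and for adjacent $u,v$ the unique shortest path is the single edge itself.

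The heart of the argument is to show that this family is \emph{subpath-closed}: if $w$ lies on $P(u,v)$, then the $u$-$w$ portion of $P(u,v)$ is exactly $P(u,w)$. I would prove this by a splicing/exchange argument. The $u$-$w$ portion $Q$ of $P(u,v)$ is a geodesic; were $P(u,w)\neq Q$, then $P(u,w)$ would have strictly smaller $\omega$-weight, and replacing $Q$ inside $P(u,v)$ by $P(u,w)$ would yield a $u$-$v$ walk of length $d(u,w)+d(w,v)=d(u,v)$, where $d$ is the distance in $G$. A walk whose length equals the distance cannot repeat a vertex, so it is a genuine shortest $u$-$v$ path, and its total weight would lie strictly below that of $P(u,v)$, contradicting minimality. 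Hence $P(u,w)=Q$.

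With the path system in hand, for each $v$ I would set $T_v:=\bigcup_{x}P(x,v)$. Subpath-closure toward the root $v$ guarantees that each $x\neq v$ has a well-defined parent (the second vertex of $P(x,v)$) lying strictly closer to $v$, so following parents terminates at $v$ and $T_v$ is a spanning tree; because adjacent vertices are joined by a single-edge geodesic, $T_v$ contains every edge incident to $v$ and is thus a $v$-spanning tree in the sense of Definition \ref{v-tree}. By Theorem \ref{nc-tree}, the operation $u*v:=A_{T_v}(u,v)$ (with $u*u=u$) is then a non-confusing travel groupoid on $G$; in particular (t1), (t2) and the non-confusing property are furnished automatically, and the $u$-$v$ path traced by $u*v,\,(u*v)*v,\dots$ is precisely $P(u,v)$.

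It remains to verify simplicity (t3), and this is where symmetry and subpath-closure pay off. Write $P(u,v)=(u=z_0,z_1,\dots ,z_d=v)$; the hypothesis $v*u\neq u$ forces $u,v$ nonadjacent, so $d\geq 2$. By symmetry, $v*u$ is the second vertex of the reversed path, namely $w=z_{d-1}$, and by subpath-closure $P(u,w)=(z_0,\dots ,z_{d-1})$, whose second vertex is again $z_1$. Therefore $u*(v*u)=u*w=z_1=u*v$, which is exactly (t3). I expect the main obstacle to be the subpath-closure step: the exchange argument must be handled carefully to guarantee that the spliced object is a simple path and hence a legitimate competitor, and one must confirm that the genericity of $\omega$ really does force a unique minimum-weight geodesic between every pair (the powers of two make all edge-subset sums distinct, which suffices). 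Once subpath-closure is secured, the verification reduces to the short computation above.
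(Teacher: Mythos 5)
The paper quotes this theorem from Nebesk\'y's 2006 article and supplies no proof of its own, so there is nothing in the text to compare your argument against; judged on its own merits, your construction is correct. The generic weighting $\omega(e_i)=2^i$ does force a unique minimum-weight geodesic between every pair (distinct edge sets get distinct weights), the resulting path system is symmetric, and your exchange argument for subpath-closure is sound: the spliced walk has exactly $d(u,v)$ edges, hence is a path, and its weight drops strictly, contradicting minimality. The one place I would ask you to add a line is the claim that $T_v=\bigcup_x P(x,v)$ is a spanning tree: a union of $|V|-1$ paths could a priori contain more than $|V|-1$ edges, so you need to observe that every edge $\{z_i,z_{i+1}\}$ of every $P(x,v)=(x=z_0,\dots,z_d=v)$ is the parent edge of $z_i$, i.e.\ that the suffix $(z_i,\dots,v)$ equals $P(z_i,v)$; this is subpath-closure at the $v$-end, obtained from the version you proved together with symmetry. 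With that in place, $T_v$ is exactly the parent-edge tree, it contains all edges incident to $v$ because adjacent pairs are joined by the one-edge geodesic, and Theorem \ref{nc-tree} hands you a non-confusing travel groupoid whose $u$-$v$ path in $T_v$ is $P(u,v)$. Your direct verification of (t3) via $v*u=z_{d-1}$ and $P(u,z_{d-1})=(z_0,\dots,z_{d-1})$ is correct; alternatively, simplicity follows at once from Theorem \ref{simple-tree}, since $P(u,v)\subseteq T_u\cap T_v$ by symmetry, though your hands-on computation has the virtue of relying only on the 2006-era machinery.
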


\begin{theorem}{\rm (\cite{MM19})}\label{simple-tree}
Let $G$ be a finite connected graph and define the set
\[
 \mathfrak{S}_G=\left\{ \{ T_v \}\in \prod_{v\in V}S_G(v) |  \exists u-v\mbox{ path on } T_u\cap T_v \ (\forall u,v\in V) \right\}
\] 
Then there exists a one-to-one correspondence between $\mathfrak{S}_G$ and the set of all simple non-confusing travel groupoids on $G$.
\end{theorem}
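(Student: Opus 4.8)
The plan is to leverage the explicit bijection of Theorem~\ref{nc-tree} between $\prod_{v\in V}S_G(v)$ and the non-confusing travel groupoids on $G$, and to prove that, under this correspondence, a non-confusing travel groupoid $(V,*)$ satisfies the simplicity axiom (t3) if and only if its associated family $\{T_v\}_{v\in V}$ lies in $\mathfrak{S}_G$. Since $\mathfrak{S}_G$ is by definition a subset of $\prod_{v\in V}S_G(v)$ and the simple non-confusing travel groupoids form a subset of all non-confusing ones, this equivalence restricts the bijection of Theorem~\ref{nc-tree} to exactly the claimed one-to-one correspondence, so no new correspondence has to be constructed from scratch.

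First I would record two reformulations in the tree/groupoid dictionary. For the groupoid attached to $\{T_v\}$, the iteration $u,u*v,u*^2v,\dots$ traces the unique $u$-$v$ path in $T_v$, with $u*v=A_{T_v}(u,v)$ and $v*u=A_{T_u}(v,u)$. Because each $T_v$ is a $v$-spanning tree it contains every edge of $G$ incident to $v$; hence for adjacent $u,v$ the edge $\{u,v\}$ lies in $T_u\cap T_v$, so the $\mathfrak{S}_G$-condition is automatic on edges. For non-adjacent $u,v$, any $u$-$v$ path contained in $T_u\cap T_v$ must be the unique $u$-$v$ path of $T_u$ and simultaneously that of $T_v$; thus $\{T_v\}\in\mathfrak{S}_G$ is equivalent to saying that for every pair the $u$-$v$ paths of $T_u$ and of $T_v$ coincide. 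This yields a prefix-consistency consequence: if $\{T_v\}\in\mathfrak{S}_G$ and $w$ lies on the common $u$-$v$ path $P$, then $P\subseteq T_u$, so its initial segment from $u$ to $w$ is the $T_u$-path, which by coincidence for the pair $(u,w)$ is the common $u$-$w$ path. With this in hand the implication $\{T_v\}\in\mathfrak{S}_G\Rightarrow(t3)$ is short: take $u,v$ with $v*u\neq u$, i.e. non-adjacent, and let $P=(u=p_0,\dots,p_k=v)$ be the common path, $k\ge2$; then $v*u=A_{T_u}(v,u)=p_{k-1}=:w$, and by prefix consistency the common $u$-$w$ path is $(p_0,\dots,p_{k-1})$, so $u*w=A_{T_w}(u,w)=p_1=A_{T_v}(u,v)=u*v$, which is precisely $u*(v*u)=u*v$.

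The substance of the theorem is the converse (t3)$\Rightarrow\{T_v\}\in\mathfrak{S}_G$, which I would prove by strong induction on $k$, the length of the $T_v$-path from $u$ to $v$, showing it coincides with the $T_u$-path. The base case $k=1$ is the edge case above. For $k\ge2$, write $P=(u=p_0,\dots,p_k=v)$ for the $T_v$-path, so that $u*v=p_1\neq v$. Interchanging the roles of $u,v$ in (t3) gives $v*(u*v)=v*u$, that is $v*u=v*p_1$; applying the induction hypothesis to the pair $(p_1,v)$, whose $T_v$-path $(p_1,\dots,p_k)$ has length $k-1$, identifies $v*p_1=A_{T_{p_1}}(v,p_1)$ as the neighbour of $v$ on the common $p_1$-$v$ path, namely $p_{k-1}$. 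Hence $v*u=p_{k-1}$, so the $T_u$-path from $v$ makes its first step to $p_{k-1}$, exactly as the reversal of $P$ does.

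I expect the main obstacle to lie precisely here: one application of (t3) controls only a single edge, and once that edge is peeled off the relevant index changes from $T_u$ to $T_{p_{k-1}}$ (equivalently from $T_v$ to $T_{p_1}$), so the induction hypothesis cannot be applied naively to the remaining segment, whose endpoints involve different trees. To close the argument I would strengthen the inductive statement to a prefix-consistency assertion, namely that every initial segment $(p_0,\dots,p_j)$ of the $T_v$-path is simultaneously the $T_{p_j}$-path from $u$ to $p_j$; this strengthened hypothesis survives the removal of one vertex and lets the peeling recurse. Checking that (t3) genuinely propagates this prefix consistency step by step along $P$ is the one delicate computation, and it is the crux of the proof; everything preceding and following it is bookkeeping within the dictionary furnished by Theorem~\ref{nc-tree} and Propositions~\ref{tg-pro} and \ref{tg-pro2}.
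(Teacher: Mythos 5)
The paper itself gives no proof of this statement --- it is quoted from \cite{MM19} --- so there is no in-paper argument to compare against; I can only assess your proposal on its own terms. Your framing is right: it suffices to show that, under the bijection of Theorem~\ref{nc-tree}, condition (t3) holds if and only if $\{T_v\}\in\mathfrak{S}_G$, and your proof of the direction $\{T_v\}\in\mathfrak{S}_G\Rightarrow\text{(t3)}$ is complete and correct (a $u$-$v$ path lying in $T_u\cap T_v$ is forced to be the unique $u$-$v$ path of each tree, and your prefix observation then gives $u*(v*u)=p_1=u*v$).

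The gap is in the converse, and you have named it yourself without closing it. Concretely: for the pair $(u,v)$ with $T_v$-path $P=(p_0,\dots,p_k)$ you establish only the single identity $v*u=v*p_1=p_{k-1}$; the remaining identities $p_{k-1}*u=p_{k-2},\ p_{k-2}*u=p_{k-3},\dots$ --- which are exactly the content of the claim that the $T_u$-path from $v$ to $u$ is the reverse of $P$ --- are never derived. Declaring that checking the propagation of (t3) ``is the crux of the proof'' is an accurate diagnosis, not a proof. Moreover, the strengthened induction hypothesis you propose is one-sided as stated: ``every initial segment $(p_0,\dots,p_j)$ is the $T_{p_j}$-path from $u$ to $p_j$'' constrains only the trees rooted at points of $P$ approached \emph{from} $u$, whereas the target statement for the pair $(u,v)$ concerns the tree $T_u$ approached from $v$, which is not an instance of that assertion for any $j$. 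To close the induction one needs the two-sided version (for all $i\neq j$, the $T_{p_j}$-path from $p_i$ to $p_j$ is the subpath of $P$ between them), and verifying that (t3) propagates it --- e.g.\ $u*p_{k-1}=u*(v*u)=u*v=p_1$ gives the new first edge, after which the hypothesis applied to the shorter path $(p_1,\dots,p_k)$ must supply the rest, and one must still recover the $T_u$-path from the far end --- is a genuine computation. As written, the hard half of the equivalence is asserted rather than proven.
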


\subsection{Semi-smooth and smooth travel groupoids}

\begin{definition}
\begin{enumerate}
\item {\bf (Smooth)} A travel groupoid $(V,*)$ is called smooth if it satisfies the following condition:\\
{\rm (t4)}\ if $u*v=u*w$, then $u*(w*v)=u*v$ (for all $u,v,w\in V$).
\item {\bf (Semi-smooth)} A travel groupoid $(V,*)$ is called semi-smooth if it satisfies the following condition:\\
{\rm (t5)}\ if $u*v=u*w$, then $u*(v*w)=u*v$ or $u*((v*w)*w)=u*v$ (for all $u,v,w\in V$).
\end{enumerate}
\end{definition}

\begin{proposition}{\rm (\cite{Neb2006})}\label{semismooth-nonconf}
Every semi-smooth travel groupoid is non-confusing.
\end{proposition}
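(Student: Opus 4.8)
The plan is to argue by contradiction, assuming $(V,*)$ is semi-smooth yet admits a confusing pair $(u,v)$, so that $u *^{i} v = u$ for some $i \geq 3$. Writing $w_j := u *^{j} v$, the recursion $w_{j+1} = w_j * v$ together with $w_i = u = w_0$ makes the sequence $(w_j)$ periodic of period $i$, so it suffices to derive a contradiction from the ``closed walk'' $w_0, w_1, \dots, w_{i-1}, w_i = u$. First I would record the two anchor identities that do \emph{not} use semi-smoothness: by Proposition \ref{tg-pro}(4) we have $u * w_1 = u*(u*v) = u*v = w_1$, and by (t1), since $w_{i-1} * v = w_i = u$, we get $u * w_{i-1} = (w_{i-1}*v)*w_{i-1} = w_{i-1}$. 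I would also note the two inequalities $w_1 \neq u$ (Proposition \ref{tg-pro}(3), as $u \neq v$) and $w_1 \neq w_{i-1}$ (otherwise $w_2 = w_1 * v = w_{i-1}*v = u$, i.e.\ $u*^{2}v = u$, forcing $u=v$ by (t2)); here $i\ge 3$ guarantees that positions $1$ and $i-1$ are genuinely distinct.

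The heart of the argument is to feed these into condition (t5) repeatedly. Define $K := \{\, k \geq 1 : u * w_k = w_1 \,\}$. The anchor identity $u*w_1 = w_1$ shows $1 \in K$. For the inductive step I would apply (t5) with its variables $u,v,w$ set to $u$, $w_k$, $v$ respectively: the hypothesis $u * w_k = u * v$ holds for $k \in K$ because both sides equal $w_1$, and the conclusion $u*(w_k * v) = u*w_k$ or $u*((w_k*v)*v) = u*w_k$ reads exactly as $u * w_{k+1} = w_1$ or $u * w_{k+2} = w_1$. Hence $k \in K$ implies $k+1 \in K$ or $k+2 \in K$, so $K$ contains a strictly increasing chain starting at $1$ whose consecutive gaps are at most $2$; in particular $K$ is unbounded.

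To finish I would compare $K$ with the residues modulo $i$. Let $k^{*}$ be the least element of the chain with $k^{*} \geq i-1$; its predecessor in the chain is at most $i-2$, and since gaps are at most $2$ we get $k^{*} \in \{i-1, i\}$. If $k^{*} = i-1$ then $u * w_{i-1} = w_1$, contradicting $u*w_{i-1} = w_{i-1} \neq w_1$; if $k^{*} = i$ then $u * u = u*w_i = w_1$, contradicting $w_1 \neq u$. Either way we reach a contradiction, so $(V,*)$ has no confusing pair. The main obstacle I anticipate is not any single computation but the correct packaging of the repeated use of (t5): because (t5) yields only a disjunction (advance by one or by two steps around the cycle), one cannot pin down a single index, and the argument must instead track the whole set $K$ and exploit that a chain with gaps $\leq 2$ cannot leap over the two consecutive ``forbidden'' positions $w_{i-1}$ and $w_i = u$ simultaneously.
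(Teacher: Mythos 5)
Your argument is correct. The paper itself gives no proof of this proposition (it is quoted from Nebesk\'y's original article), so there is nothing to compare against line by line; judged on its own, your proof is complete and self-contained. The two anchor identities $u*w_1=w_1$ and $u*w_{i-1}=w_{i-1}$, the inequalities $w_1\neq u$ and $w_1\neq w_{i-1}$, and the instantiation of (t5) with $(u,w_k,v)$ all check out, and the bookkeeping with the set $K$ handles the only delicate point --- that (t5) yields a disjunction, so one can only guarantee a chain in $K$ with gaps at most $2$. Since such a chain starting at $1\le i-2$ must first enter $\{i-1,i\}$, and both of those positions are excluded by the anchors, the contradiction is forced. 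This is essentially the standard way to prove the statement, and the one-sentence justification of why the chain cannot leap past both forbidden positions is exactly the right thing to make explicit.
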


\begin{theorem}{\rm (\cite{MM16})}\label{smooth-exist}
For any finite connected graph $G$, there exists a smooth travel groupoid on $G$.
\end{theorem}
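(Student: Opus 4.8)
The plan is to build a travel groupoid on $G$ explicitly and then verify (t1), (t2) and the smoothness axiom (t4), the identity $G_V=G$ being arranged to hold automatically. First I would record a reduction that narrows the search. Smoothness implies semi-smoothness: if $u*v=u*w$, the hypothesis of (t4) is symmetric in $v$ and $w$, so applying (t4) to $(u,v,w)$ and to $(u,w,v)$ gives both $u*(w*v)=u*v$ and $u*(v*w)=u*w=u*v$, and the latter is exactly the first alternative in (t5). Hence by Proposition~\ref{semismooth-nonconf} every smooth travel groupoid is non-confusing, and by Theorem~\ref{nc-tree} it is encoded by a family $\{T_v\}_{v\in V}$ of $v$-spanning trees via $u*v=A_{T_v}(u,v)$. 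Since each $T_v$ contains every edge of $G$ incident to $v$, any such family already realises $G_V=G$ and satisfies (t1), (t2). Thus the whole problem reduces to choosing the family so that the induced operation obeys (t4).

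Next I would recast (t4) as a closure condition. Fixing $u$, the map $x\mapsto u*x$ partitions $V\setminus\{u\}$ into fibres $C(u,a)=\{x : u*x=a\}$, one per neighbour $a$ of $u$. Axiom (t4) says precisely that each fibre is closed under the operation, in the sense that $v,w\in C(u,a)$ forces $w*v\in C(u,a)\cup\{a\}$. Intuitively $C(u,a)$ is the cone of vertices routed out of $u$ through $a$, and smoothness demands that a route between two vertices of a cone never leaves that cone. The construction must therefore make these cones cone-closed simultaneously for every base point $u$.

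The naive first attempt is a geodesic family: fix a linear order on $V$ and let $u*v$ be the least neighbour of $u$ on a shortest $u$--$v$ path, so that $T_v$ is a breadth-first tree and $d(u*v,v)=d(u,v)-1$. This instantly yields (t1), (t2) and $G_V=G$, but it cannot be pushed through to (t4): breadth-first cones need not be convex, and, worse, two distinct base points can impose contradictory demands on the same value $w*v$. Indeed one can exhibit a small graph on which one base point forces $u*v=u*w$ with unique repair $w*v=a$, while another forces $w*v$ to a different value, so that no tie-breaking of a geodesic routing is smooth. The conclusion I would draw is that the winning family must be genuinely non-geodesic.

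The corrected plan is to generate $\{T_v\}$ from a single global order by a depth-first (normal spanning tree) rule rather than a breadth-first one, routing each pair through the least available intermediate vertices so that a route between two members of a cone is pinned inside that cone, even at the cost of length. The main obstacle is exactly the verification of (t4) for such a family: one must prove the global, simultaneous cone-closure over all base points $u$, reconciling the fact that $w*v$ is read off $w$'s own tree while it is constrained by the cone structure seen from every other vertex. By contrast (t1), (t2) and $G_V=G$ remain routine by-products of the correspondence in Theorem~\ref{nc-tree}. Establishing that the depth-first family is cone-closed, equivalently that its routing is consistent across all base points, is where essentially all the work lies, and it is the step I expect to be hard.
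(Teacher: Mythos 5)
Your reduction is sound as far as it goes: smoothness does imply semi-smoothness (apply (t4) to the triple with $v$ and $w$ interchanged), hence non-confusingness by Proposition~\ref{semismooth-nonconf}, so by Theorem~\ref{nc-tree} any smooth travel groupoid must come from a family $\{T_v\}_{v\in V}$ of $v$-spanning trees, and (t1), (t2) and $G_V=G$ are then automatic. The rewriting of (t4) as closure of the fibres $C(u,a)=\{x: u*x=a\}$ under the operation is also correct. But at the point where the theorem acquires content the argument stops. You never actually define the family $\{T_v\}$: ``generate $\{T_v\}$ by a depth-first rule, routing each pair through the least available intermediate vertices so that a route between two members of a cone is pinned inside that cone'' describes the property you want the construction to have, not the construction itself, and you then explicitly defer the verification of (t4) as ``the step I expect to be hard.'' Since that verification --- the simultaneous cone-closure over all base points --- is, as you yourself say, essentially all of the work, what you have is a plan rather than a proof. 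The auxiliary claim that no tie-breaking of a geodesic routing can ever be smooth is likewise asserted without a witness graph; it is only motivational, but as written it is a further unproved assertion.

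Note also that this theorem is quoted from \cite{MM16}; the present paper contains no proof of it, so there is no in-paper argument to compare yours against. The cited work supplies exactly what your outline lacks: a concretely specified travel groupoid (built starting from a spanning tree of $G$, on which the travel groupoid is unique and easily checked to be smooth, and then incorporating the remaining edges of $G$ while preserving smoothness) together with a direct verification of (t4). To complete your proposal you would need to do the same: fix a precise rule producing $T_v$ for every $v\in V$, and then prove the cone-closure for that specific rule rather than postulating it.
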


For some special graphs, the following results are known.

\begin{proposition}{\rm (\cite{Neb2006})}
For any complete bipartite graph $G$, there exists a simple smooth travel groupoid on $G$.
\end{proposition}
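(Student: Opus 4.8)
The plan is to exhibit an explicit travel groupoid on $G$ and then verify the five conditions (t1), (t2), (t3), (t4) directly. Write the two independent sets of $G$ as $A$ and $B$; both are nonempty because $G$ has at least one edge. Fix once and for all a ``hub'' vertex $a^{*}\in A$ and a ``hub'' vertex $b^{*}\in B$, and define a binary operation $*$ on $V=A\cup B$ by
\[
u*v=
\begin{cases}
u & \text{if } u=v,\\
v & \text{if } u,v \text{ lie in different parts},\\
b^{*} & \text{if } u,v\in A \text{ and } u\neq v,\\
a^{*} & \text{if } u,v\in B \text{ and } u\neq v.
\end{cases}
\]
Intuitively $u*v$ takes the direct edge to $v$ whenever $v$ is a neighbour of $u$, and otherwise (when $v$ lies in the same part, hence at distance $2$) it routes through the fixed hub in the opposite part.

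First I would check (t1) and (t2) and confirm that the associated graph $G_V$ is exactly $G$. Each is a short case analysis according to whether $u,v$ are equal, in different parts, or in the same part; the common mechanism is that one of the factors produced by $*$ always lands in the opposite part, so the subsequent multiplication collapses via the ``different parts'' rule (e.g.\ $(u*v)*u=b^{*}*u=u$ when $u,v\in A$). In particular $u*v=v$ holds precisely when $u\neq v$ and $u,v$ lie in different parts, which gives $E(G_V)=E(G)$.

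Next I would verify simplicity (t3). The hypothesis $v*u\neq u$ forces $u\neq v$ with $u,v$ in the same part. If $u,v\in A$ then $v*u=b^{*}$, so $u*(v*u)=u*b^{*}=b^{*}=u*v$; the case $u,v\in B$ is symmetric.

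The hard part will be smoothness (t4), because there I must understand all coincidences $u*v=u*w$. The key observation is that for $u\in A$ the set of $x$ with $u*x=b^{*}$ is exactly $(A\setminus\{u\})\cup\{b^{*}\}$: either $x$ is another vertex of $A$, or $x=b^{*}$ itself (a neighbour, handled by the direct rule). Every other value of $u*x$, namely $u$ or a vertex of $B\setminus\{b^{*}\}$, is attained by a single $x$. Hence the only nontrivial instance of $u*v=u*w$ with $v\neq w$ is $v,w\in(A\setminus\{u\})\cup\{b^{*}\}$, where $u*v=u*w=b^{*}$. It then remains to see that $w*v\in(A\setminus\{u\})\cup\{b^{*}\}$ for every such pair, so that $u*(w*v)=b^{*}=u*v$; this splits into $v=w$, both factors in $A\setminus\{u\}$, both equal to $b^{*}$, and the two ``mixed'' subcases with one factor $b^{*}$ and the other in $A\setminus\{u\}$. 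The mixed subcases are exactly where care is needed: there the two operands sit in different parts yet still collide under $u*(-)$, and one must use the different-parts rule to compute $w*v$ before re-applying $u*(-)$. The case $u\in B$ is symmetric with $a^{*}$ and $b^{*}$ interchanged, which completes the verification of (t4) and hence the proof.
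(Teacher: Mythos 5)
The paper does not actually prove this proposition: it is quoted from Nebesk\'y \cite{Neb2006} without proof, and none of the paper's own machinery yields it directly (indeed, the example on $K_{2,3}$ in Section 3 shows that \emph{not every} travel groupoid on a complete bipartite graph is smooth, and Theorem \ref{smooth-tra} only covers parts of size at most $2$, so an explicit construction really is required). Your hub construction is correct and self-contained. I checked the cases: (t1) and (t2) hold because $u*v$ always lands either on $v$ itself or in the part opposite to $u$, after which the different-parts rule collapses the next product; the associated graph is $G$ because $u*v=v$ with $u\neq v$ happens exactly for $u,v$ in different parts (a hub value $b^{*}$ can never accidentally equal $v\in A$); (t3) reduces to $u*b^{*}=b^{*}=u*v$ for $u,v$ in the same part; and for (t4) your description of the fibres of $u*(-)$ is exactly right --- for $u\in A$ the only non-singleton fibre is $(A\setminus\{u\})\cup\{b^{*}\}$ over the value $b^{*}$, and in each of the subcases ($v,w\in A\setminus\{u\}$; one of them equal to $b^{*}$) one computes $w*v\in(A\setminus\{u\})\cup\{b^{*}\}$, so $u*(w*v)=b^{*}=u*v$. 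The only cosmetic gap is the degenerate boundary: you need both parts nonempty to choose the hubs, which is exactly the case in which $G$ admits any travel groupoid at all (Remark after Theorem 2.5), so nothing is lost. In the language of Theorem \ref{nc-tree}, your groupoid corresponds to the family of double-star spanning trees $T_{v}$ in which every non-neighbour of $v$ is attached to the hub of the part opposite its own, which is a clean way to see non-confusingness as well.
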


\begin{theorem}{\rm (\cite{Neb2006})}
Let $G$ be a geodetic graph of diameter 2,
and let $(V,*)$ be the proper groupoid on $G$.
Then $(V,*)$ is a smooth travel groupoid.
\end{theorem}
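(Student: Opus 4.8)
The plan is to first make explicit the structure of the proper groupoid on a geodetic graph $G$ of diameter $2$, and then to verify the travel-groupoid axioms (t1) and (t2) together with smoothness (t4) by a direct case analysis. The proper groupoid sends a pair $(u,v)$ with $u\neq v$ to the neighbor of $u$ lying on the unique shortest $u$-$v$ path, and fixes the diagonal, $u*u=u$. Two structural facts will drive everything. First, since $\diam(G)=2$, any two distinct vertices are either adjacent or at distance exactly $2$; hence for $u\neq v$ we have $u*v=v$ when $u,v$ are adjacent, and $u*v=w$ when they are at distance $2$, where $w$ is a common neighbor. Second, because $G$ is geodetic, this $w$ is the \emph{unique} common neighbor of $u$ and $v$; this uniqueness is the property I expect to carry the hard step.

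For (t1) and (t2) I would split on whether $u,v$ are adjacent or at distance $2$. If they are adjacent then $u*v=v$ and $v*u=u$, so $(u*v)*u=v*u=u$; if they are at distance $2$ with common neighbor $w=u*v$, then $w$ is adjacent to $u$, so $(u*v)*u=w*u=u$. This gives (t1). For (t2) I would check that for $u\neq v$ one always has $(u*v)*v=v\neq u$: in the adjacent case $(u*v)*v=v*v=v$, and in the distance-$2$ case $w=u*v$ is adjacent to $v$, so $(u*v)*v=w*v=v$. Hence $(u*v)*v=u$ forces $u=v$, which is (t2). At this point the proper groupoid is a travel groupoid, and I have used only that distinct vertices are adjacent or at distance $2$.

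The substance is smoothness (t4): assuming $u*v=u*w$, I must show $u*(w*v)=u*v$. Write $x=u*v=u*w$. If $x=u$ then $w=u=v$ by Proposition \ref{tg-pro}(3) and the identity is trivial, so assume $x$ is a genuine neighbor of $u$. The easy subcases are those in which at least one of $v,w$ is adjacent to $u$ (equivalently equals $x$): there $w*v$ reduces to $x$ or to $v$ via the adjacency rule, and $u*(w*v)=x$ follows at once. The main case is when both $v$ and $w$ lie at distance $2$ from $u$; then $x$ is a common neighbor of $u,v$ and also of $u,w$, so $x$ is adjacent to each of $v$ and $w$. If $v$ and $w$ are equal or adjacent, then $w*v\in\{v,x\}$ and again $u*(w*v)=x$. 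The decisive point is when $v$ and $w$ are non-adjacent: then $w*v$ is the unique common neighbor of $w$ and $v$, but $x$ is already adjacent to both, so by the geodetic uniqueness of the common neighbor we must have $w*v=x$, whence $u*(w*v)=u*x=x=u*v$.

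I expect this final step to be the only real obstacle, and it is exactly where geodeticity (not merely diameter $2$) is used: the argument collapses the computation of $w*v$ onto the already-chosen vertex $x$ by invoking uniqueness of the distance-$2$ common neighbor. Once that case is settled, collecting the subcases shows (t4) holds for all $u,v,w$, so the proper groupoid is a smooth travel groupoid.
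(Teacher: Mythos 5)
Your argument is correct. Note, however, that the paper itself offers no proof of this statement: it is quoted verbatim from Nebesk\'y \cite{Neb2006} as background, and the paper does not even define the proper groupoid, so there is nothing internal to compare your proof against. Taking the standard definition (for $u\neq v$, $u*v$ is the neighbour of $u$ on the unique shortest $u$--$v$ path), your verification of (t1) and (t2) by the adjacent/distance-$2$ dichotomy is sound, and your treatment of (t4) is complete: the only nontrivial configuration is $v,w$ both at distance $2$ from $u$ and nonadjacent to each other, where $x=u*v=u*w$ is a common neighbour of $v$ and $w$ and geodeticity forces $w*v=x$, hence $u*(w*v)=u*x=x=u*v$. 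You have correctly isolated the single place where geodeticity (rather than just $\diam(G)=2$) is needed, and the remaining subcases all reduce to $w*v\in\{v,x\}$ with $u*v=u*x=x$. This is a complete, self-contained proof of the cited result.
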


\section{Travel groupoids on complete multipartite graphs}
In this section, we give a characterization of a travel groupoid on complete multipartite graphs.
Firstly, we reconsider a complete multipartite graph from the viewpoint of the condition of edges.
\begin{lemma}\label{edge-condition}
A graph $G$ is a complete multipartite graph, if and only if $G$ satisfies the following condition:
\\
if $vw\in E(G)$ then $vu\in E(G)$ or $wu\in E(G)$ (for all pairwise distinct $u,v,w\in V(G)$).
\end{lemma}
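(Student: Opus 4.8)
The plan is to prove both directions of the biconditional, treating the stated edge condition as a structural constraint and showing it is equivalent to the partition structure. The key conceptual step is to recognize that the edge condition is really a statement about the \emph{complement} of the relation ``being adjacent,'' i.e. about the non-adjacency relation. Specifically, a complete multipartite graph is exactly a graph in which non-adjacency (together with equality) is an equivalence relation, the equivalence classes being the parts $V_1,\dots,V_l$. So the strategy is to translate the given edge condition into a statement about non-adjacency and show it forces transitivity of non-adjacency.

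For the forward direction, suppose $G = K_{n_1,\dots,n_l}$ and let $u,v,w$ be pairwise distinct with $vw \in E(G)$. Then $v$ and $w$ lie in different parts. Given any third vertex $u$, it cannot lie in the same part as both $v$ and $w$ (since $v,w$ are in different parts), so $u$ lies in a part different from at least one of $v,w$; whichever that is yields the required edge $vu$ or $wu$. For the converse, I would assume the edge condition and define a relation $\sim$ on $V(G)$ by declaring $x \sim y$ iff $x = y$ or $xy \notin E(G)$. Reflexivity and symmetry are immediate; the content is transitivity. Suppose $x \sim y$ and $y \sim z$ with $x,y,z$ pairwise distinct, and suppose for contradiction that $x \not\sim z$, i.e. $xz \in E(G)$. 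Applying the edge condition to the edge $xz$ with the third vertex $y$ gives $xy \in E(G)$ or $zy \in E(G)$, contradicting $x \sim y$ and $z \sim y$ respectively. Hence $\sim$ is an equivalence relation, and its classes are independent sets (no edges within a class, by definition of $\sim$) with all edges present between distinct classes (if $x,y$ lie in different classes then $x \not\sim y$, so $xy \in E(G)$). This exhibits $G$ as the complete multipartite graph on those classes.

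I expect the main obstacle to be bookkeeping rather than depth: the edge condition is stated only for \emph{pairwise distinct} triples, so in the transitivity argument I must verify that $x,y,z$ are genuinely distinct before invoking it. This is where care is needed — the cases where two of $x,y,z$ coincide must be handled separately (they are trivial, since $\sim$ is reflexive and symmetric, but they should be dispatched explicitly so the appeal to the hypothesis is legitimate). Once the equivalence relation is in hand, the identification of $G$ with a complete multipartite graph is purely definitional and requires no further argument.
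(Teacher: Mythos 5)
Your proposal is correct and follows essentially the same route as the paper: the forward direction by noting $u$ cannot share a part with both endpoints of an edge, and the converse by defining $x\sim y$ iff $x=y$ or $xy\notin E(G)$ and deriving transitivity from the contrapositive of the edge condition. Your extra care about pairwise distinctness in the transitivity step is a small refinement of the same argument, not a different approach.
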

\begin{proof}
If $G$ is an edgeless graph or $|V(G)|\leq 2$ the lemma holds clearly.

Let $G$ be a complete multipartite graph and $u,v,w$ are pairwise distinct vertices of $G$.
If $vw\in E(G)$, the vertices $v$ and $w$ are in different independent sets.
Thus, $u$ adjacent to $v$ or $w$.
This means $uv \in E(G)$ or $uw \in E(G)$.

Conversely, let $G$ be a graph satisfying the above condition. 
We prove that $V(G)$ can be partitioned into some independent sets, and any vertices of different independent sets are adjacent.
Define a relation $\sim$ on $V(G)$ by
\[
 u\sim v :\iff u=v \mbox{ or } uv\not\in E(G)
\]
for all $u,v\in V(G)$. 
Clearly, the relation $\sim$ is reflexive and symmetric. 
From the contraposition of the above condition, 
we obtain the transitivity of $\sim$.
Hence, the relation $\sim$ is an equivalence relation on $V(G)$.
Denote the equivalence class of $v\in V(G)$ by $[v]$, 
and let $[v_1],\cdots ,[v_l]$ be all equivalence classes of $V(G)$.
Thus, we obtain a partition
\[
 V(G)=[v_1]\cup \cdots \cup [v_l].
\]
From the definition, obviously, each equivalence class $[v_i]$ is an independent set of $G$ and every pair of vertices from different equivalence classes is adjacent. Therefore $G$ is a complete multipartite graph with $l$ parts.
\end{proof}

By rewriting the above condition from the viewpoint of the travel groupoid, we obtain the following  characterization.

\begin{remark}
From Remark \ref{rem-exists}, 
note that a travel groupoid is on a complete multipartite graph $G$ 
if and only if 
$G$ is not an edgeless graph with $|V(G)|>1$.
\end{remark}

\begin{theorem}\label{tra-on-compmulti}
Let $(V,*)$ be a travel groupoid.
Then the following statements are equivalent:
\begin{enumerate}
\item $(V,*)$ is on a complete multipartite graph;
\item $(V,*)$ satisfies the following condition;
\\
 {\rm (tcm)} if $v*w=w$, then  $v*u=u$ or $w*u=u$ (for all pairwise distinct $u,v,w\in V$).
\end{enumerate}
\end{theorem}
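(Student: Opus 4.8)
The plan is to recognize that condition (tcm) is nothing but the edge condition of Lemma \ref{edge-condition}, rewritten through the groupoid operation. Since every travel groupoid is on exactly one graph, the assertion that ``$(V,*)$ is on a complete multipartite graph'' means precisely that its associated graph $G_V$ is complete multipartite. By Lemma \ref{edge-condition}, the latter is equivalent to $G_V$ satisfying: for all pairwise distinct $u,v,w\in V$, if $vw\in E(G_V)$ then $vu\in E(G_V)$ or $wu\in E(G_V)$. Thus it suffices to show that this edge condition on $G_V$ is equivalent to (tcm), and both implications of the theorem will follow simultaneously.

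The key observation I would establish first is a clean dictionary between edges of $G_V$ and the operation $*$: for distinct $a,b\in V$, one has $\{a,b\}\in E(G_V)$ if and only if $a*b=b$. This is immediate from the definition $E(G_V)=\{\{u,v\}\mid u\neq u*v=v\}$, and the symmetry of the unordered edge is exactly guaranteed by Proposition \ref{tg-pro}(2), which gives $a*b=b\iff b*a=a$. With this dictionary, each of the three edge-membership predicates appearing in Lemma \ref{edge-condition}---namely $vw\in E(G_V)$, $vu\in E(G_V)$, and $wu\in E(G_V)$---translates verbatim into $v*w=w$, $v*u=u$, and $w*u=u$, respectively.

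Substituting these translations into ``if $vw\in E(G_V)$ then $vu\in E(G_V)$ or $wu\in E(G_V)$ (for all pairwise distinct $u,v,w$)'' yields exactly ``if $v*w=w$ then $v*u=u$ or $w*u=u$ (for all pairwise distinct $u,v,w$),'' which is condition (tcm). Because the translation is an equivalence at the level of predicates, the edge condition holds for $G_V$ if and only if (tcm) holds; combined with Lemma \ref{edge-condition}, this gives that $G_V$ is complete multipartite if and only if (tcm) holds, as required.

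I do not expect a genuine obstacle, since the combinatorial content has been front-loaded into Lemma \ref{edge-condition}; the work here is purely the faithful translation. The only points requiring care are the bookkeeping of the distinctness hypotheses (the quantifier over pairwise distinct $u,v,w$ matches in both the lemma and (tcm), so no boundary cases with repeated vertices intrude), and the degenerate graphs. For the latter, Lemma \ref{edge-condition} already disposes of the edgeless graph and the case $|V|\leq 2$; moreover, by Remark \ref{rem-exists} there is no travel groupoid on an edgeless graph with more than one vertex, so these situations are either vacuous or trivially consistent with (tcm).
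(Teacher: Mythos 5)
Your proposal is correct and follows essentially the same route as the paper: both reduce the theorem to Lemma \ref{edge-condition} by translating the edge condition on $G_V$ into (tcm) via the dictionary $\{a,b\}\in E(G_V)\iff a*b=b$. Your write-up is merely more explicit about the role of Proposition \ref{tg-pro}(2) in making this dictionary well defined on unordered edges.
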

\begin{proof}
By the definition of the associated graph $G_V$, 
the condition (tcm) corresponds to the following condition:
\\
if $vw\in E(G_V)$ then $vu\in E(G_V)$ or $wu\in E(G_V)$ (for all pairwise distinct $u,v,w\in V(G)$).
\\
Thus, the theorem follows from Lemma \ref{edge-condition}.
\end{proof}


The rest of this section, we study a groupoid satisfying the condition (tcm) and some properties of a travel groupoid on a complete multipartite graph. 
\begin{lemma}\label{diam-2}
Let $(V,*)$ be a groupoid satisfying (t1) and (tcm).
Then the following statements are equivalent:
\begin{enumerate}
\item $(V,*)$ satisfies (t2);
\item $(V,*)$ satisfies $u* u=u$ for all $u\in V$;
\item $(V,*)$ satisfies $u*^2 v=v$ for all $u,v\in V$.
\end{enumerate}
\end{lemma}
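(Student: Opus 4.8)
The plan is to prove the cycle of implications $(1)\Rightarrow(2)\Rightarrow(3)\Rightarrow(1)$, so that all three become equivalent. Two of these links are short and use only the groupoid axioms (t1),(t2); the third is where the hypothesis (tcm) must do the real work, and it is the step I expect to be the main obstacle.

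First I would dispatch the two easy directions. For $(3)\Rightarrow(1)$: if $(V,*)$ satisfies $u*^2v=v$ for all $u,v$, then the premise $(u*v)*v=u$ of (t2) forces $u=(u*v)*v=v$, so (t2) holds. For $(1)\Rightarrow(2)$: put $a:=u*u$. Applying (t1) with the pair $(u,u)$ gives $a*u=(u*u)*u=u$, and therefore $(a*u)*u=u*u=a$; this is exactly the premise of (t2) applied to the pair $(a,u)$, so (t2) yields $a=u$, i.e. $u*u=u$. (The same computation, run from (3) in place of (t2), gives $(3)\Rightarrow(2)$ at once: $(a*u)*u=u$ by (3) while $(a*u)*u=a$ as above, whence $a=u$.)

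The core of the lemma is the remaining implication $(2)\Rightarrow(3)$. Fix $u\neq v$ and set $w:=u*v$; by (t1) we have $w*u=u$. If $w=v$, then $(u*v)*v=v*v=v$ by idempotency. If $w\notin\{u,v\}$, then $u,v,w$ are pairwise distinct and the relation $w*u=u$ lets me invoke (tcm) with third vertex $v$, giving $w*v=v$ or $u*v=v$; since $u*v=w\neq v$, the first alternative must hold, so $(u*v)*v=w*v=v$, as desired. The delicate remaining case is $w=u$, that is $u*v=u$ with $u\neq v$, which must be excluded for (3) to hold and which I expect to be the hardest point. When (t2) is at hand this is immediate, since $u*v=u$ gives $(u*v)*v=u*v=u$ and hence $u=v$ — this is precisely why the alternative route $(1)\Rightarrow(3)$ is transparent — so the crux is to eliminate the configuration $u*v=u$ using idempotency together with the full strength of (tcm), this being the case on which the argument hinges and where I would concentrate the most care.
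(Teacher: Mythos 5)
Your decomposition into $(1)\Rightarrow(2)\Rightarrow(3)\Rightarrow(1)$ is sound and the two links you complete are correct, but the case you flag as ``the crux'' --- eliminating $u*v=u$ with $u\neq v$ from (t1), (tcm) and idempotency alone --- is not merely hard, it is impossible, so your proof cannot be finished along these lines. Consider the left-projection groupoid $x*y:=x$ on any set with $|V|\geq 2$. It satisfies (t1), since $(u*v)*u=u*u=u$; it satisfies (tcm) vacuously, because $v*w=v\neq w$ for distinct $v,w$, so the premise of (tcm) is never met on a pairwise distinct triple; and it is idempotent. Yet $u*^{2}v=(u*v)*v=u*v=u\neq v$, so statement (3) fails (as does (t2)). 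Hence $(2)\Rightarrow(3)$ is false under the stated hypotheses, and the configuration $u*v=u$ genuinely occurs.

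You have in fact put your finger on a gap in the paper itself: its proof of $(2)\Rightarrow(3)$ assumes only $u\neq v$ and $u*v\neq v$ and then applies (tcm) to the triple $u$, $v$, $u*v$, without verifying the third distinctness $u*v\neq u$ that (tcm) requires, i.e.\ it silently skips exactly your problematic case. What survives is $(1)\Leftrightarrow(3)$ together with the one-way implication $(1)\Rightarrow(2)$: once (t2) is available, Proposition \ref{tg-pro} gives $u*v\neq u$ for $u\neq v$, the three vertices are pairwise distinct, and your (tcm) argument closes the loop exactly as you wrote it. That weaker statement is all that Corollary \ref{prop-diam2} and the subsequent results actually use, since there $(V,*)$ is a travel groupoid and (t2) holds by hypothesis; but item (2) cannot stand as an equivalent condition, and the lemma (and the theorem that invokes it to characterize travel groupoids on complete multipartite graphs as idempotent groupoids satisfying (t1) and (tcm)) needs to be amended accordingly.
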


\begin{proof}
Suppose that $(V,*)$ satisfies (t2), then $(V,*)$ is a travel groupoid and $u*u=u$ (for all $u\in V$) follows from Proposition \ref{tg-pro}.

Let $(V,*)$ satisfy $u*u=u$ for all $u\in V$.
If $u,v\in V$ satisfies $u*v=v$ or $u=v$, obviously we obtain $u*^2v=v$.
Suppose that there exist $u,v\in V (u\neq v)$ satisfying $u*v\neq v$.
Then, by (t1) and (tcm), $(u*v)*u=u$ induces
\[
u*^2 v=(u*v)*v=v.
\]
Thus, we obtain $u*^2 v=v$ for all $u,v\in V$. 

Lastly, suppose that $(V,*)$ satisfies $u*^2 v=v$ for all $u,v\in V$.
Then (t2) follows from this condition clearly.
\end{proof}

\begin{definition}
A groupoid $(V,*)$ is called an idempotent if it satisfies $u*u=u$ for all $u\in V$.
\end{definition}

\begin{remark}
By Proposition \ref{tg-pro}, travel groupoid is an idempotent groupoid.
\end{remark}


Using Lemma \ref{diam-2}, we can prove that a travel groupoid on a complete multipartite graph corresponds to the following groupoid.

\begin{theorem}
Let $(V,*)$ be a groupoid.
Then the following statements are equivalent:
\begin{enumerate}
\item $(V,*)$ is a travel groupoid on a complete multipartite graph;
\item $(V,*)$ is an idempotent groupoid satisfying the conditions (t1) and (tcm).
\end{enumerate}
\end{theorem}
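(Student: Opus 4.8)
The plan is to deduce this equivalence directly from Lemma \ref{diam-2} together with Theorem \ref{tra-on-compmulti}, since these two results already contain all the substantive work. The guiding observation is that the only gap between a general groupoid satisfying (t1) and (tcm) and a genuine travel groupoid is the axiom (t2), and Lemma \ref{diam-2} tells us that under (t1) and (tcm) this axiom is equivalent to idempotency. So the whole statement should collapse into a short chaining of the earlier results.

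For the implication (1) $\Rightarrow$ (2), I would start from a travel groupoid $(V,*)$ on a complete multipartite graph. By definition it satisfies (t1), and by Proposition \ref{tg-pro} it is idempotent. Since it is on a complete multipartite graph, Theorem \ref{tra-on-compmulti} yields the condition (tcm). Thus every requirement in (2) is met.

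For the converse (2) $\Rightarrow$ (1), suppose $(V,*)$ is idempotent and satisfies (t1) and (tcm). Because $(V,*)$ satisfies (t1) and (tcm), Lemma \ref{diam-2} applies, and the equivalence in that lemma between idempotency (its statement (2)) and (t2) (its statement (1)) shows that (t2) holds. Hence $(V,*)$ satisfies both (t1) and (t2), so it is a travel groupoid. Finally, since this travel groupoid satisfies (tcm), Theorem \ref{tra-on-compmulti} guarantees that it is on a complete multipartite graph, which is precisely statement (1).

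The proof carries no serious obstacle: the genuine analysis lives in Lemma \ref{diam-2} (recovering (t2) from idempotency via (t1) and (tcm)) and in Theorem \ref{tra-on-compmulti}. The only point requiring care is to invoke Lemma \ref{diam-2} correctly in the backward direction, recognizing that idempotency is exactly the hypothesis that upgrades the groupoid to a travel groupoid, rather than something that must be assumed or verified separately.
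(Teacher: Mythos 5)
Your proof is correct and follows exactly the route the paper takes: the paper's own proof is the one-line observation that the theorem follows from Theorem \ref{tra-on-compmulti} and Lemma \ref{diam-2}, and you have simply spelled out that chaining (idempotency from Proposition \ref{tg-pro} and (tcm) from Theorem \ref{tra-on-compmulti} for the forward direction; Lemma \ref{diam-2} recovering (t2) from idempotency for the converse). No differences of substance.
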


\begin{proof}
This follows from Theorem \ref{tra-on-compmulti} and Lemma \ref{diam-2}.
\end{proof}

Clearly, if a travel groupoid $(V,*)$ satisfies $u*^2 v=v$ for all $u,v\in V$, then the associated graph $G_V$ has diameter less than 2. 
The converse is not always true (see Remark \ref{rem-not-diam2}), but, for a travel groupoid satisfying the condition (tcm), namely for a travel groupoids on complete multipartite graphs, we can obtain the converse.

\begin{corollary}\label{prop-diam2}
A travel groupoid $(V,*)$ on a complete multipartite graph is non-confusing 
and satisfies $u*^2 v=v$ for all $u,v\in V$.
\end{corollary}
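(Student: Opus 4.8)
The plan is to derive both conclusions from the single iterate identity $u*^2v=v$, which itself follows almost immediately from Lemma~\ref{diam-2}. First I would record the hypotheses available. Since $(V,*)$ is a travel groupoid it satisfies (t1) and (t2), and by Proposition~\ref{tg-pro}(1) it is idempotent; since it is on a complete multipartite graph, Theorem~\ref{tra-on-compmulti} supplies condition (tcm). Thus $(V,*)$ is a groupoid satisfying (t1) and (tcm) for which statement (1) of Lemma~\ref{diam-2} (namely (t2)) holds. By the equivalence in that lemma, statement (3) holds as well, so $u*^2v=v$ for all $u,v\in V$.

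Next I would promote this to $u*^iv=v$ for every $i\ge 2$. Using the recursion $u*^{i+1}v=(u*^iv)*v$ together with idempotency, one computes $u*^3v=(u*^2v)*v=v*v=v$, and an easy induction then shows that the value $v$ is fixed under all further right multiplications by $v$; hence $u*^iv=v$ for all $i\ge 2$.

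Finally, for the non-confusing claim I would argue by contradiction. Suppose some ordered pair $(u,v)$ with $u\ne v$ were confusing, so that $u*^iv=u$ for some $i\ge 3$. Since $i\ge 2$, the previous step gives $u*^iv=v$, whence $u=v$, contradicting $u\ne v$. Therefore $(V,*)$ admits no confusing pair and is non-confusing.

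I do not expect a genuine obstacle here: all the substantive work is carried by Lemma~\ref{diam-2}, and the only point needing care is the verification that the iterate stabilizes at $v$ for every $i\ge 2$, so that the defining condition $u*^iv=u$ of a confusing pair becomes impossible once $u\ne v$. One could alternatively phrase the non-confusing conclusion through Proposition~\ref{NC-prop1} by exhibiting the $u$-$v$ path $u,\,u*v,\,v$ (using Proposition~\ref{tg-pro2} and the edge definition to confirm it is a path), but the direct contradiction above is shorter.
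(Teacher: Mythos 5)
Your proof is correct and follows essentially the same route as the paper, which simply observes that $(V,*)$ satisfies (t1) and (tcm) and invokes Lemma~\ref{diam-2}. The only difference is that you explicitly spell out why $u*^2v=v$ rules out confusing pairs (via $u*^iv=v$ for all $i\ge 2$), a step the paper leaves implicit; that elaboration is sound.
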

\begin{proof}
Since $(V,*)$ satisfies (t1) and (tcm), it follows from Lemma \ref{diam-2}.
\end{proof}

\begin{remark}\label{rem-not-diam2}
Corollary \ref{prop-diam2} is not trivial consequence from the diameter of graph.
For example, the following travel groupoid $(V,*)$ is on a graph $G$ which has diameter 2, but $u_1,u_4\in V$ satisfies 
$u_4*^2 u_1=u_2\neq u_1$.
\[
\begin{tabular}{c|cccc}
$ *$ & $u_{1}$ & $u_{2}$ & $u_{3}$ & $u_{4}$ \\ \hline
$u_{1}$  & $u_{1}$ & $u_{2}$ & $u_{2}$ & $u_{2}$   \\
$u_{2}$  & $u_{1}$ & $u_{2}$ & $u_{3}$ & $u_{4}$   \\
$u_{3}$  & $u_{2}$ & $u_{2}$ & $u_{3}$ & $u_{4}$   \\
$u_{4}$  & $u_{3}$ & $u_{2}$ & $u_{3}$ & $u_{4}$  
\end{tabular}
\quad \quad
G:
\begin{xy}
{(0,0) *{u_2}="a_1"\ar @{-} (0,15) *{u_1}="a_1"},
{(0,0) *{u_2}="a_4"\ar @{-} (15,-15) *{u_4}="a_5"},
{(0,0) *{u_2}="a_4"\ar @{-} (-15,-15) *{u_3}="a_5"},
{(-15,-15) *{u_3}="a_2"\ar @{-} (15,-15) *{u_4}="a_3"},
\end{xy}
\]

\end{remark}

For the important classes, semi-smooth travel groupoids and smooth travel groupoids, we obtain the following results.

\begin{theorem}\label{semi-smooth-tra}
A travel groupoid on a complete multipartite graph is semi-smooth.
\end{theorem}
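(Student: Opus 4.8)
The plan is to verify condition (t5) directly, leaning entirely on the strong identity supplied by Corollary \ref{prop-diam2}, which guarantees that every travel groupoid $(V,*)$ on a complete multipartite graph satisfies $x*^2 y = y$ for all $x,y\in V$. So I would begin by recalling precisely what (t5) asks: assuming $u*v=u*w$, I must produce at least one of the two equalities $u*(v*w)=u*v$ or $u*((v*w)*w)=u*v$. The whole point is that the second alternative comes essentially for free.

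The key observation is that the inner iterated term collapses. By Corollary \ref{prop-diam2} applied with the arguments $x=v$ and $y=w$, we have $(v*w)*w = v*^2 w = w$. Substituting this into the second expression of (t5) gives $u*((v*w)*w)=u*w$, and the hypothesis $u*v=u*w$ of (t5) then yields $u*((v*w)*w)=u*w=u*v$. Thus, whenever the premise $u*v=u*w$ holds, the second disjunct of (t5) holds unconditionally, and therefore $(V,*)$ is semi-smooth.

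I do not expect a genuine obstacle here, since all the substantive work is already packaged into Corollary \ref{prop-diam2} and what remains is a one-line substitution. The only point requiring care is to apply the identity $x*^2 y=y$ with the correct arguments, namely $x=v$ and $y=w$, so that $(v*w)*w$ simplifies to $w$ and not to an expression involving $u$. I would also note that (t5) is demanded for all $u,v,w$ with no distinctness assumption, and that the identity $(v*w)*w=w$ holds for every pair $v,w$, including degenerate cases such as $v=w$, where idempotency gives $(v*v)*v=v*v=v$. Hence the argument covers all triples, and no separate case analysis on adjacency or on the partition of $V$ is needed.
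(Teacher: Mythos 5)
Your proof is correct and is exactly the argument the paper intends: the paper's proof of Theorem \ref{semi-smooth-tra} simply cites Corollary \ref{prop-diam2}, and your substitution $(v*w)*w = v*^2 w = w$, giving $u*((v*w)*w)=u*w=u*v$, is the one-line verification of (t5) that the citation leaves implicit.
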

\begin{proof}
It follows from Corollary \ref{prop-diam2}.
\end{proof}

\begin{theorem}\label{smooth-tra}
A travel groupoid $(V,*)$ on a complete multipartite graph $K_{n_1,\cdots ,n_l}$ $(n_1,\cdots ,n_l \leq 2)$ is smooth. 
\end{theorem}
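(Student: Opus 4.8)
The plan is to verify the smoothness condition (t4) directly: assuming $u*v = u*w$, I want to deduce $u*(w*v) = u*v$. First I would dispose of the degenerate configurations. If $v = w$ the conclusion is immediate. If $u = v$, then idempotency gives $u*v = u$, so $u*w = u$, and Proposition \ref{tg-pro}(3) forces $w = u = v$; the symmetric case $u=w$ is the same. Hence it suffices to treat the situation in which $u$, $v$, $w$ are pairwise distinct.

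The core of the argument will be the claim that, under the hypothesis $u*v = u*w$ together with the size restriction $n_i \le 2$, the vertices $v$ and $w$ are necessarily \emph{adjacent} in $K_{n_1,\dots,n_l}$. I would establish this by contradiction, using the basic characterization coming straight from the definition of the associated graph $G_V$: for distinct $x,y$ one has $x*y = y$ exactly when $x$ and $y$ lie in different parts (are adjacent). Suppose instead that $v$ and $w$ were non-adjacent, i.e. they belonged to a common part $P$. Since $|P| = n_i \le 2$ and $v \ne w$, we would be forced to have $P = \{v,w\}$, so $u \notin P$; thus $u$ sits in a part different from that of $v$ and of $w$ and is therefore adjacent to both. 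This yields $u*v = v$ and $u*w = w$, and then the hypothesis $u*v = u*w$ would give $v = w$, contradicting distinctness.

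With this claim the proof closes at once: because $v$ and $w$ are adjacent, the adjacency characterization gives $w*v = v$, whence $u*(w*v) = u*v$, which is exactly (t4). I expect the only genuine subtlety to be the reduction to three distinct vertices followed by the recognition that the hypothesis $n_i \le 2$ is precisely what forbids $u$, $v$, $w$ from all lying in one part; this is the single place the assumption enters, and it is what makes the three-variable condition (t4) collapse to the trivial relation $w*v = v$. (One could, if desired, also record via Corollary \ref{prop-diam2} that $u*^2 v = v$, but the description of adjacency through $*$ already suffices for everything above.)
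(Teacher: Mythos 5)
Your proposal is correct and follows essentially the same strategy as the paper: reduce to pairwise distinct $u,v,w$, use the bound $n_i\le 2$ to show that the configuration cannot have all the relevant non-adjacencies, conclude $w*v=v$, and hence $u*(w*v)=u*v$. The only cosmetic difference is that you case-split on whether $v$ and $w$ share a part (arguing directly that they must be adjacent), while the paper splits on whether $u$ is adjacent to $v$ and to $w$; your decomposition is, if anything, slightly cleaner since it handles the subcases $u*v\neq v,\,u*w=w$ and $u*v=v,\,u*w\neq w$ uniformly.
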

\begin{proof}
Suppose that $u*v=u*w$ for $u,v,w\in V (v\neq w)$.
If $u=v$ or $u=w$, we obtain $v=w$ by Proposition \ref{tg-pro}. 
This is a contradiction. 
Thus, we can take pairwise different vertices $u,v,w\in V$.
If $u*v\neq v$ and $u*w\neq w$, then we obtain $v*w\neq w$ by the contraposition of (tcm), 
and $u,v,w$ make an independent set. This is a contradiction to $n_i\leq 2$.
Suppose that $u*v\neq v$ and $u*w=w$. Then, using (tcm) again, we obtain $w*v=v$.
Thus, 
\[
 u*(w*v)=u*v.
\]
Therefore, $(V,*)$ is smooth. 
\end{proof}

\begin{example}
The following table gives a travel groupoid on complete bipartite graph $G=K_{2,3}$.
\[
\begin{tabular}{c|ccccc}
$ *$ & $u_{1}$ & $u_{2}$ & $u_{3}$ & $u_{4}$ & $u_{5}$  \\ \hline
$u_{1}$  & $u_{1}$ & $u_{3}$ & $u_{3}$ & $u_{4}$ & $u_{5}$  \\
$u_{2}$  & $u_{5}$ & $u_{2}$ & $u_{3}$ & $u_{4}$ & $u_{5}$  \\
$u_{3}$  & $u_{1}$ & $u_{2}$ & $u_{3}$ & $u_{1}$ & $u_{1}$  \\
$u_{4}$  & $u_{1}$ & $u_{2}$ & $u_{2}$ & $u_{4}$ & $u_{2}$  \\
$u_{5}$  & $u_{1}$ & $u_{2}$ & $u_{2}$ & $u_{2}$ & $u_{5}$  
\end{tabular}
\quad \quad 
G:
\begin{xy}
{(10,10) *{u_1}="a_1"\ar @{-} (0,-15) *{u_3}="a_1"},
{(10,10) *{u_1}="a_1"\ar @{-} (20,-15) *{u_4}="a_1"},
{(10,10) *{u_1}="a_1"\ar @{-} (40,-15) *{u_5}="a_1"},
{(30,10) *{u_2}="a_1"\ar @{-} (0,-15) *{u_3}="a_1"},
{(30,10) *{u_2}="a_1"\ar @{-} (20,-15) *{u_4}="a_1"},
{(30,10) *{u_2}="a_1"\ar @{-} (40,-15) *{u_5}="a_1"},
\end{xy}
\]
This travel groupoid is not smooth.
Indeed, $u_{3}*u_{4}=u_{3}*u_{5}=u_{1}$ but $u_{3}*(u_{4}*u_{5})=u_{2}\neq u_{1}=u_{3}*u_{5}$.
\end{example}

\section{Travel groupoids on complete graphs, on complete bipartite graphs, on star graphs}
In this section, we give characterizations of travel groupoids on a complete graph, on a complete bipartite graph and on a star graph. 
These graphs are special classes of complete multipartite graph.
For the first, we give a characterization of a travel groupoid on a complete graph.
\begin{theorem}\label{asso-tra}
Let $(V,*)$ be a groupoid. 
Then the following statements are equivalent:
\begin{enumerate}
\item $(V,*)$ is a travel groupoid on a complete graph.
\item $(V,*)$ satisfies $u*v=v$ for all $u,v\in V$.
\item $(V,*)$ is a travel groupoid satisfying the associativity
\[
 (u*v)*w=u*(v*w)
\]
for all $u,v,w\in V$. 
\end{enumerate}
\end{theorem}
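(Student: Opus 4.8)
The plan is to treat statement (2)---the identity $u*v=v$ for all $u,v$---as the pivot and to establish the two biconditionals $(1)\Leftrightarrow(2)$ and $(2)\Leftrightarrow(3)$, which together give the full equivalence. Three of the four implied directions turn out to be routine once (2) is in hand. If $u*v=v$ holds identically, then $(u*v)*u=v*u=u$ gives (t1), while $(u*v)*v=v*v=v$ reduces (t2) to the triviality that $(u*v)*v=u$ forces $u=v$; hence $(V,*)$ is a travel groupoid. For associativity both sides collapse, $(u*v)*w=v*w=w=u*w=u*(v*w)$, yielding (3); and reading off the associated graph, the edge condition $u\neq u*v=v$ becomes simply $u\neq v$, so every pair of distinct vertices is adjacent and $G_V$ is complete, yielding (1). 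This disposes of $(2)\Rightarrow(3)$ and $(2)\Rightarrow(1)$.

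For $(1)\Rightarrow(2)$ I would argue directly from the definition of $E(G_V)$: in a complete graph every pair of distinct vertices forms an edge, and since $\{u,v\}\in E(G_V)$ precisely when $u\neq u*v=v$, this forces $u*v=v$ for all distinct $u,v$; idempotency from Proposition \ref{tg-pro} handles the case $u=v$, so (2) holds.

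The one step with genuine content is $(3)\Rightarrow(2)$, and this is where I expect the main obstacle to lie. Here I want to extract the strong identity $u*v=v$ from the weak hypothesis of associativity together with the travel-groupoid axioms. The key is to apply associativity to the single expression $(u*v)*v$: using idempotency $v*v=v$ (Proposition \ref{tg-pro}) one obtains
\[
(u*v)*v=u*(v*v)=u*v.
\]
Writing $a=u*v$, this says $a*v=a$, and by Proposition \ref{tg-pro}(3) the relation $a*y=a$ holds if and only if $a=y$; hence $a=v$, that is, $u*v=v$. Thus associativity alone forces (2), and combined with $(2)\Rightarrow(1)$ and $(2)\Rightarrow(3)$ the equivalence is complete. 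The delicate point worth flagging is that this single, innocuous-looking instance of associativity already pins down the entire multiplication table through Proposition \ref{tg-pro}(3); no induction along walks and no appeal to the graph's diameter or connectivity is required.
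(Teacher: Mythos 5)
Your proposal is correct and follows essentially the same route as the paper: both reduce everything to the identity $u*v=v$, and both extract it from associativity by one well-chosen instance followed by Proposition \ref{tg-pro}(3). The only cosmetic difference is that the paper applies associativity to $(u*v)*u$ and uses (t1), whereas you apply it to $(u*v)*v$ and use idempotency; the two computations are interchangeable.
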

\begin{proof}
The equivalence of 1 and 2 is follows from Proposition \ref{tg-pro2}.

Let $(V,*)$ be a travel groupoid satisfying the associativity.
Then, by (t1) 
\[
(u*v)*u=u*(v*u)=u.
\]
Thus, by Proposition \ref{tg-pro}, we obtain $v*u=u$ for all $u,v\in V$. 
This means that any two vertices are adjacent.
Conversely, since any two vertices of complete graph are adjacent, 
a travel groupoid on the complete graph satisfies $x*y=y$ for all vertices $x$ and $y$. 
Thus, we obtain
\[
 (u*v)*w=w=u*(v*w).
\]
\end{proof}

By Theorem \ref{asso-tra}, we have the following two results.
Here, a clique of a graph means a set of mutually adjacent vertices, 
and a subgroupoid of $(V,*)$ means a subset closed under the binary operation $*$. 
Note that, a subgroupoid of a travel groupoid is also a travel groupoid.
\begin{corollary}\label{asso-clique}
Let $(V,*)$ be a travel groupoid on a graph $G$.
Then, there exist one to one correspondence between  
an associative subgroupoid of $V$ and a clique of $G$. 
\end{corollary}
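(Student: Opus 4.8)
The plan is to reduce everything to Theorem \ref{asso-tra}, which tells us that a travel groupoid is associative precisely when it satisfies $u*v=v$ for all of its elements, and that such groupoids are exactly the travel groupoids on complete graphs. Since a subgroupoid of $(V,*)$ is again a travel groupoid, and its operation is forced to be the restriction of $*$, an associative subgroupoid is completely determined by its underlying set. I would therefore phrase the correspondence as the identity map on subsets of $V$ and prove that the underlying set of an associative subgroupoid is exactly a clique of $G$.

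First I would show that if $W\subseteq V$ is an associative subgroupoid, then $W$ is a clique. Applying Theorem \ref{asso-tra} to the travel groupoid $(W,*)$, associativity (condition 3) yields $u*v=v$ for all $u,v\in W$ (condition 2). Hence for distinct $u,v\in W$ we have $u\neq u*v=v$, which by the definition of the associated graph $G_V$ means $\{u,v\}\in E(G)$. Thus any two distinct elements of $W$ are adjacent in $G$, i.e. $W$ is a clique.

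Conversely, given a clique $C$ of $G$, I would verify that $C$ is an associative subgroupoid. For distinct $u,v\in C$, adjacency means $\{u,v\}\in E(G)$, so by the definition of $G_V$ we get $u*v=v\in C$; together with idempotency $u*u=u\in C$ from Proposition \ref{tg-pro}, this shows $C$ is closed under $*$ and hence a subgroupoid. As $(C,*)$ then satisfies $u*v=v$ for all $u,v\in C$, Theorem \ref{asso-tra} makes it a travel groupoid on a complete graph, in particular associative.

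Combining the two directions, the underlying set of an associative subgroupoid is always a clique and every clique underlies a (unique) associative subgroupoid, so the assignment $W\mapsto W$ on subsets of $V$ is the desired one-to-one correspondence. I do not expect a genuine obstacle here; the only points requiring care are the bookkeeping that a subgroupoid is determined by its underlying set (so that associative subgroupoids and cliques are compared as the same kind of object) and the consistent reading of the edge condition $u\neq u*v=v$ in $G$ and in the associated graph of $(W,*)$.
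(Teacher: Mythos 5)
Your proposal is correct and matches the paper's intended argument: the corollary is stated there as an immediate consequence of Theorem \ref{asso-tra}, and your write-up simply makes explicit the two directions (associative subgroupoid $\Rightarrow$ $u*v=v$ on $W$ $\Rightarrow$ clique, and clique $\Rightarrow$ closed under $*$ with $u*v=v$ $\Rightarrow$ associative), using the same ingredients (Theorem \ref{asso-tra}, Proposition \ref{tg-pro}, and the definition of $G_V$). The observation that a subgroupoid is determined by its underlying set, so the correspondence is just $W\mapsto W$, is exactly the right way to make the ``one-to-one correspondence'' precise.
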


\begin{corollary}
A travel groupoid on a complete multipartite graph with $l$ parts has a $l$ elements associative subgroupoid.
\end{corollary}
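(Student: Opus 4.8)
The plan is to reduce the statement to a purely graph-theoretic observation by invoking Corollary \ref{asso-clique}, which establishes a cardinality-preserving one-to-one correspondence between the associative subgroupoids of $(V,*)$ and the cliques of the associated graph $G$. Under this correspondence a clique $C$ of $G$ corresponds to the subgroupoid with underlying set $C$ (which is associative precisely because, by the definition of the associated graph together with Proposition \ref{tg-pro}(2), any two distinct adjacent vertices $u,v$ satisfy $u*v=v$, so that $(C,*)$ meets condition (2) of Theorem \ref{asso-tra}). Consequently, producing an $l$-element associative subgroupoid is equivalent to exhibiting a clique of $G$ with exactly $l$ vertices.

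First I would recall that, by hypothesis, $G=K_{n_1,\ldots,n_l}$ is a complete multipartite graph whose vertex set is partitioned into independent sets $V_1,\ldots,V_l$, with every pair of vertices lying in distinct parts being adjacent. Choosing one representative $w_i\in V_i$ from each of the $l$ parts, the set $\{w_1,\ldots,w_l\}$ consists of pairwise adjacent vertices, since $w_i$ and $w_j$ (for $i\neq j$) belong to different independent sets. Hence $\{w_1,\ldots,w_l\}$ is a clique of $G$ with exactly $l$ elements. Applying the correspondence of Corollary \ref{asso-clique} to this clique then yields an associative subgroupoid of $(V,*)$ with $l$ elements, as desired.

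The argument is a direct application of the established correspondence, so there is no substantial obstacle to overcome. The only point warranting attention is the elementary fact that a complete multipartite graph with $l$ parts contains a clique of size $l$; this is immediate, because each part is an independent set and therefore contributes at most one vertex to any clique, while selecting exactly one vertex from every part realizes the bound. I expect the verification that the clique-to-subgroupoid correspondence preserves the number of elements to be routine, since the bijection of Corollary \ref{asso-clique} sends a clique to the subgroupoid on the very same underlying set.
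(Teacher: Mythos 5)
Your proof is correct and follows exactly the route the paper intends: the corollary is stated as an immediate consequence of Corollary \ref{asso-clique}, with the $l$-element clique obtained by choosing one vertex from each of the $l$ parts. The paper gives no explicit proof, and your argument supplies precisely the standard one.
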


Moreover, these results induce another characterization of a travel groupoid on a complete multipartite graph from the viewpoint of subgroupoid.

\begin{theorem}
Let $(V,*)$ be a travel groupoid. 
Then the following statements are equivalent:
\begin{enumerate}
\item $(V,*)$ is on a complete multipartite graph;
\item $(V,*)$ satisfies the following condition;
\\
every maximal associative subgroupoid $W$ of $V$ satisfies that, for all $v\in V\setminus W$, there is $w\in W$ such that 
\[
 (W-\{ w\})\cup \{ v \}
\]
is a maximal associative subgroupoid of $V$.

\end{enumerate}
\end{theorem}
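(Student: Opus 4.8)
The plan is to translate condition (2) into a purely combinatorial statement about the associated graph $G_V$ and then invoke Lemma~\ref{edge-condition}. By Theorem~\ref{asso-tra}, a subgroupoid is associative exactly when $u*v=v$ holds throughout it, and Corollary~\ref{asso-clique} identifies associative subgroupoids of $(V,*)$ with cliques of $G_V$ via the identity on underlying vertex sets. Since this identification preserves inclusion, maximal associative subgroupoids correspond precisely to maximal cliques of $G_V$, and inside any clique one always has $u*v=v$. Thus condition (2) becomes the following exchange property for $G_V$: for every maximal clique $W$ and every vertex $v\notin W$, there is $w\in W$ with $(W\setminus\{w\})\cup\{v\}$ again a maximal clique. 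The whole proof then reduces to showing that this exchange property is equivalent to $G_V$ being complete multipartite.

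For the implication $(1)\Rightarrow(2)$, I would first record that the maximal cliques of a complete multipartite graph $K_{n_1,\dots,n_l}$ are exactly the transversals of the partition, i.e.\ the sets meeting each part $V_i$ in exactly one vertex (two vertices in a common part are non-adjacent, and a clique omitting some part can always be enlarged). Given such a transversal $W$ and a vertex $v\notin W$ lying in part $V_i$, let $w$ be the unique element of $W\cap V_i$. Replacing $w$ by $v$ keeps exactly one vertex in each part, so $(W\setminus\{w\})\cup\{v\}$ is again a transversal, hence a maximal clique, which supplies the required $w$.

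For the converse $(2)\Rightarrow(1)$, I would verify the edge condition of Lemma~\ref{edge-condition} directly. Take pairwise distinct $u,v,w$ with $vw\in E(G_V)$; the goal is $uv\in E(G_V)$ or $uw\in E(G_V)$. Extend the edge $\{v,w\}$ to a maximal clique $W$. If $u\in W$, then $u$ is adjacent to both $v$ and $w$ and we are done, so assume $u\notin W$ and apply the exchange property to obtain $x\in W$ with $W'=(W\setminus\{x\})\cup\{u\}$ a maximal clique. A short case analysis on $x$ closes the argument: if $x\notin\{v,w\}$ then $u,v,w\in W'$, so both $uv$ and $uw$ are edges; if $x=v$ then $u,w\in W'$ forces $uw\in E(G_V)$; and if $x=w$ then $u,v\in W'$ forces $uv\in E(G_V)$. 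In every case the edge condition holds, so $G_V$ is complete multipartite.

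The main obstacle is the conceptual translation in the first paragraph: once maximal associative subgroupoids are recognized as maximal cliques, and, in the complete multipartite case, as transversals of the partition, both directions become routine. The only points demanding care are that the correspondence of Corollary~\ref{asso-clique} respects maximality (so that ``maximal associative subgroupoid'' and ``maximal clique'' really match), and that in the converse the three cases for the exchanged vertex $x$ are all accounted for.
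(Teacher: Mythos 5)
Your proof is correct, but your converse takes a genuinely different route from the paper's. Both proofs begin by identifying maximal associative subgroupoids with maximal cliques of $G_V$ (via Theorem~\ref{asso-tra} and Corollary~\ref{asso-clique}), and your forward direction via transversals of the partition is the same idea the paper leaves implicit. For the converse, however, the paper fixes a single maximal associative subgroupoid $W$, first proves that the exchanged element $w$ is \emph{unique} for each $v\notin W$ (two distinct choices would force $W\cup\{v\}$ to be associative, contradicting maximality), and then uses the resulting map $\phi_W\colon V\setminus W\to W$ to exhibit the multipartite partition explicitly: the fibers $\phi_W^{-1}(w)\cup\{w\}$ are shown to be independent sets with all cross-edges present. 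You instead verify the local edge condition of Lemma~\ref{edge-condition} directly, by extending a given edge $\{v,w\}$ to a maximal clique and running a three-case analysis on which vertex gets exchanged for $u$; this needs only the \emph{existence} of an exchange partner, not its uniqueness, and is shorter and arguably cleaner. What the paper's construction buys in return is the explicit partition of $V$ into parts indexed by $W$, which makes the multipartite structure of $G_V$ visible rather than inferred through the lemma. Your argument is complete as stated; the only point worth making fully explicit is the one you already flag, namely that the clique/subgroupoid correspondence is inclusion-preserving so that maximality transfers.
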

\begin{proof}
Let $(V,*)$ be a travel groupoid on a complete multipartite graph.
Then a maximal clique corresponds to a maximal associative subgroupoid and obviously the above condition holds.

Conversely, fix a maximal associative subgroupoid $W$.
Let $v\in V\setminus W$ and suppose that there are $w_1, w_2\in W$ such that 
\[
 W_1=(W-\{ w_1 \} )\cup \{ v \} ,
 W_2=(W-\{ w_2 \} )\cup \{ v \}
\]
are maximal associative subgroupoids.
Then, by the associativity of $W_1, W_2$ and $W$, 
\[
x*y=y
\]
holds for all $x,y\in W\cup \{ v \}$.
Thus, by Theorem \ref{asso-tra}, $W\cup \{ v \}$ is an associative subgroupoid of $V$.
This contradicts to the maximality of $W$.
Therefore, for each $v\in V\setminus W$, there is only one $w\in W$ satisfying the above condition.
By using this correspondence, we can define a map $\phi_W :V\setminus W \to W$.

For the map $\phi_W$, a set $\phi_{W}^{-1}(w)\cup \{w \}$ is an independent set of $G_V$.
Since, if there are adjacent vertices $u,v$ in $\phi_{W}^{-1}(w)\cup \{w \}$,
then the set $(W-\{ w \})\cup \{u,v \}$ make an associative subgroupoid. 
This contradicts to the maximality. 

Therefore, we obtain the following partition of $V$
\[
V=\bigcup_{w\in W}\phi^{-1}_{W}(w)\cup\{w \}.
\]

Lastly, for any $w_1, w_2\in W (w_1\neq w_2)$,
the associativity of $(W-\{ w_1 \})\cup \{ u\}$ and $(W-\{ w_2 \})\cup \{ v\}$ implies 
the existence of an edge between  any $u\in \phi_{W}^{-1}(w_1)\cup\{ w_1\}$ and $v\in \phi^{-1}_{W}(w_2)\cup\{ w_2\} $. 


Thus, these imply that the associated graph $G_V$ is a complete multipartite graph.
This completes the proof.
\end{proof}


Next, we characterize a travel groupoid on a complete bipartite graph and on a star graph.
Note that, a complete bipartite graph is a triangle-free complete multipartite graph.

\begin{theorem}
Let $(V,*)$ be a travel groupoid and $|V|>1$.
Then the following statements are equivalent:
\begin{enumerate}
\item $(V,*)$ is on a complete bipartite graph;
\item $(V,*)$ satisfies the following condition;\\
 {\rm (tcb)} if $v*w=w$, then  either $v*u=u$ or $w*u=u$ (for all pairwise distinct  $u,v,w\in V$).
\end{enumerate}
\end{theorem}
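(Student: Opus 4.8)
The plan is to build directly on Theorem~\ref{tra-on-compmulti}, which characterizes travel groupoids on complete multipartite graphs by condition (tcm), and to isolate the complete bipartite case as the triangle-free subcase. The key observation is that (tcb) differs from (tcm) only in that its disjunction is \emph{exclusive}: (tcm) requires that $v*u=u$ or $w*u=u$ holds, while (tcb) requires that exactly one of the two holds. Since a complete bipartite graph is precisely a triangle-free complete multipartite graph, I expect this exclusivity to encode exactly the absence of triangles.

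First I would record the edge dictionary supplied by Proposition~\ref{tg-pro}(2): for distinct $x,y\in V$ one has $x*y=y$ if and only if $xy\in E(G_V)$. Under this translation, (tcb) reads: whenever $vw\in E(G_V)$, exactly one of $vu$, $wu$ belongs to $E(G_V)$, for all pairwise distinct $u,v,w$.

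For the implication (1)$\Rightarrow$(2), suppose $(V,*)$ is on a complete bipartite graph with parts $A$ and $B$, and take pairwise distinct $u,v,w$ with $v*w=w$, so that $vw\in E(G_V)$ and hence $v,w$ lie in opposite parts, say $v\in A$ and $w\in B$. A short case analysis on whether $u\in A$ or $u\in B$ shows that exactly one of the edges $vu$, $wu$ is present, namely the one joining $u$ to the vertex in the opposite part; thus (tcb) holds. For the converse (2)$\Rightarrow$(1), I first note that (tcb) implies (tcm), since an exclusive disjunction forces the corresponding inclusive one, so Theorem~\ref{tra-on-compmulti} already gives that $G_V$ is complete multipartite. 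It then remains to rule out three or more parts: if $G_V$ had parts $V_1,\dots,V_l$ with $l\ge 3$, choosing $a\in V_1$, $b\in V_2$, $c\in V_3$ yields pairwise adjacent vertices, so $ab\in E(G_V)$ while both $ac$ and $bc$ lie in $E(G_V)$, contradicting the exclusivity in (tcb). Hence $G_V$ is triangle-free and so has at most two parts; since $|V|>1$ and $(V,*)$ exists, $G_V$ is not edgeless by Remark~\ref{rem-exists}, so it has at least two parts. Therefore $G_V$ is complete bipartite.

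The main obstacle will be fixing the intended reading of the exclusive ``either \dots\ or'' in (tcb) and verifying its translation into a precise edge statement; once this is in hand, the equivalence collapses to the standard fact that complete bipartite graphs are exactly the triangle-free complete multipartite graphs, and both directions follow by routine case analysis.
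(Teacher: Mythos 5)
Your proposal is correct and follows essentially the same route as the paper: read (tcb) as an exclusive disjunction, reduce to (tcm) and Theorem~\ref{tra-on-compmulti}, and identify complete bipartite graphs as the triangle-free complete multipartite graphs. The only cosmetic differences are that in the forward direction you argue directly from the bipartition where the paper invokes the clique--associative-subgroupoid correspondence (Corollary~\ref{asso-clique}), and that you explicitly rule out the edgeless one-part case via Remark~\ref{rem-exists}, a point the paper leaves implicit.
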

\begin{proof}
Let $(V,*)$ be a travel groupoid on a complete bipartite graph $G$. 
Since a complete bipartite graph is a special class of a complete multipartite graph, $(V,*)$ satisfies the condition (tcm).
If $G=K_{1,1}$, $V$ has no three pairwise distinct vertices and $(V,*)$ satisfies (tcb) obviously.
Suppose that $V$ has more than or equal to three vertices, and $u,v,w\in V$ are pairwise distinct vertices satisfying $v*w=w$.
If $v*u=u$ and $w*u=u$, then $u,v,w$ make an associative subgroupoid of $V$.
Thus, by Corollary \ref{asso-clique}, $G$ include a triangle.
This is a contradiction. 

Conversely, suppose that $(V,*)$ satisfies the condition (tcb).
Then, clearly $(V,*)$ satisfies the condition (tcm), and $(V,*)$ is a travel groupoid on a complete multipartite graph. 
If $V$ consists of two vertices, the associated graph $G_V$ is a complete bipartite graph $K_{1,1}$.
Therefore $(V,*)$ is a travel groupoid on a complete bipartite graph.
If $V$ has more than or equal to three vertices and the associated graph $G_V$ has a triangle $\{ u,v,w \}$, then $u,v,w$ satisfies
\[
 v*w=w, v*u=u, w*u=u.
\]
This is a contradiction.
Thus, $G_V$ is a triangle-free complete multipartite graph.
\end{proof}

\begin{definition}
For a groupoid $(V,*)$, a left unit is an element $e\in V$ satisfying
\[
 e*v=v
\]
for all $v\in V$.
\end{definition}

\begin{theorem}
Let $(V,*)$ be a travel groupoid and $|V|>1$.
Then the following statements are equivalent:
\begin{enumerate}
\item $(V,*)$ is on a star graph;
\item $(V,*)$ satisfies (tcb) and has a left unit $e\in V$. 
\end{enumerate}
\end{theorem}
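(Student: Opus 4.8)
The plan is to trade the condition (tcb) for the graph-theoretic fact that $G_V$ is complete bipartite, using the preceding theorem, and to read the hypothesis of a left unit as the presence of a \emph{universal vertex} of $G_V$, i.e.\ a vertex adjacent to all others. One half of this dictionary is immediate from the definition of the associated graph: if $e\in V$ is a left unit, then $e*v=v$ for every $v\neq e$, and since $e\neq v$ this exhibits $\{e,v\}$ as an edge of $G_V$, so $e$ is adjacent to every other vertex. I will use the convention that a star is a complete bipartite graph $K_{1,n}$; equivalently, a complete bipartite graph is a star if and only if it has a universal vertex, since in $K_{m,n}$ a vertex of a given part is adjacent to every other vertex precisely when that part is a singleton.

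For $(2)\Rightarrow(1)$ I would argue directly. Condition (tcb) places $(V,*)$ on a complete bipartite graph $G_V$ by the preceding theorem, and the left unit $e$ is a universal vertex of $G_V$ by the observation above. A complete bipartite graph with a universal vertex has the part containing that vertex reduced to a singleton, hence is of the form $K_{1,n}$, a star (the smallest case $|V|=2$ giving $K_{1,1}$). Thus $(V,*)$ is on a star graph.

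For the converse $(1)\Rightarrow(2)$, a star is in particular complete bipartite, so $(V,*)$ satisfies (tcb) by the preceding theorem, and it only remains to produce a left unit. The candidate is the center $e$ of the star, and I must check $e*v=v$ for all $v\in V$. The case $v=e$ is idempotency (Proposition \ref{tg-pro}). For $v\neq e$, Proposition \ref{tg-pro2} gives that $e*v$ is adjacent to $e$, so $e*v$ is a leaf. If $e*v$ were a leaf $w\neq v$, then Corollary \ref{prop-diam2} supplies $u*^2v=v$, whence $w*v=(e*v)*v=e*^2v=v$ with $w\neq v$, forcing $\{w,v\}\in E(G_V)$; but $v$ is a leaf (as $v\neq e$) and two distinct leaves of a star are never adjacent, a contradiction. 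Hence $e*v=v$, and $e$ is a left unit.

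The one genuinely nontrivial step is this last one: excluding the possibility that $e*v$ lands on a leaf other than $v$. A bare ``the center sits at distance one from every leaf'' intuition does not suffice, since in general a travel groupoid may route $u*v$ along a non-geodesic walk; the decisive ingredient is the collapse identity $u*^2v=v$ valid on every complete multipartite graph (Corollary \ref{prop-diam2}), which forbids any detour and pins $e*v$ to $v$.
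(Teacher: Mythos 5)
Your proof is correct, and its overall strategy (star $=$ complete bipartite graph with a universal vertex; left unit $\leftrightarrow$ universal vertex) is exactly the dictionary the paper invokes in its one-line proof. However, the step you single out as ``genuinely nontrivial'' --- showing that the center $e$ satisfies $e*v=v$ rather than routing to some other leaf --- is actually immediate and needs neither Proposition \ref{tg-pro2} nor Corollary \ref{prop-diam2}. By the definition of the associated graph, $\{e,v\}\in E(G_V)$ means precisely that $e\neq e*v=v$; so the hypothesis that $e$ is adjacent to every other vertex of $G_V$ already \emph{is} the statement $e*v=v$ for all $v\neq e$, and idempotency handles $v=e$. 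Your worry about non-geodesic detours conflates two different things: Proposition \ref{tg-pro2} tells you $e*v$ is \emph{some} neighbor of $e$, but the edge relation of $G_V$ is defined by the equation $u*v=v$, so adjacency pins down the product exactly. Your detour through the collapse identity $u*^2v=v$ is valid but redundant, and the closing paragraph's claim that it is the ``decisive ingredient'' is mistaken; with that observation your argument collapses to the paper's.
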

\begin{proof}
It follows from the fact that a star graph is a special class of a complete bipartite graph, and a left unit is an adjacent vertex of all other vertices.
\end{proof}

\section{Number of travel groupoids on complete multipartite graphs}

In the last of this article, 
we give the number of the travel groupoids on the complete multipartite graph.
We denote the multinomial coefficients by
\[
\binom{n}{a_1,\cdots ,a_l}= \frac{n!}{a_{1}!\cdots a_{l}!}.
\]

Let $G=K_{n_1,\cdots ,n_l} $ be a complete multipartite graph with $l$ parts. Suppose that 
\[
V(G)=V=V_1 \cup \cdots \cup V_l,
\]
and let $V_i$ be 
\[
V_i
 =\{ v_{i,1}, \cdots ,v_{i,n_i} \} 
\]
for each $1\leq i\leq l$.
Since a travel groupoid on $K_{n_1,\dots ,n_l}$ is non-confusing, 
the number of the travel groupoids on $K_{n_1,\cdots ,n_l}$ is equal to 
\[
 \left|  \prod_{x\in V}S_{G}(x) \right|
 =\prod_{x\in V } \left| S_{G }(x) \right|
 =\prod_{p=1}^{l}  \prod_{x\in V_p} \left| S_{G}(x) \right| 
\]

\begin{lemma}\label{tree-number}
For all $x\in V_p$, 
\[
\left| S_G(x) \right|
=\sum_{S_p=n_p -1}\binom{ n_{p}-1 }{ m_{1,1},\cdots ,m_{p-1,n_{p-1}} ,m_{p+1,1},\cdots ,m_{l,n_{l} } }
\]
Here, $S_p$ means the following sum
\[
S_p=m_{1,1}+\cdots +m_{p-1,n_{p-1} }+m_{p+1,1}+\cdots +m_{l,n_{l} }.
\]

\end{lemma}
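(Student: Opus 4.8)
The plan is to set up an explicit bijection between the set $S_G(x)$ of $x$-spanning trees and the set of maps that attach each vertex of $V_p\setminus\{x\}$ to a neighbour of $x$, and then to count those maps by the multinomial theorem. Throughout, fix $x\in V_p$ and write $G=K_{n_1,\dots,n_l}$.

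First I would record the local structure at $x$. Since the parts of $G$ are independent and every cross pair is an edge, the neighbours of $x$ are exactly the vertices of $V\setminus V_p$, while the non-neighbours are the remaining $n_p-1$ vertices of $V_p\setminus\{x\}$. By Definition \ref{v-tree}, every $T\in S_G(x)$ must contain all $|V|-n_p$ edges $\{x,y\}$ with $y\in V\setminus V_p$, and this star $T_0$ is already a tree on the vertex set $\{x\}\cup(V\setminus V_p)$.

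Next I would pin down how the remaining vertices sit inside $T$. A spanning tree has $|V|-1$ edges, so beyond the $|V|-n_p$ forced star edges there remain exactly $n_p-1$ edges. None of these can join two vertices of $V\setminus V_p$ (that would close a cycle through $x$ together with $T_0$), none can be incident to $x$ (all edges at $x$ are already forced), and none can join two vertices of $V_p$ (which is independent). Hence every remaining edge joins a vertex of $V_p\setminus\{x\}$ to a vertex of $V\setminus V_p$. Counting endpoints in $V_p\setminus\{x\}$: the sum of the $T$-degrees of these $n_p-1$ vertices equals $n_p-1$, while each is at least $1$ by connectedness, so each is exactly $1$. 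Thus in $T$ every vertex of $V_p\setminus\{x\}$ is a leaf attached to a single vertex of $V\setminus V_p$, which defines a map $f_T\colon V_p\setminus\{x\}\to V\setminus V_p$. Conversely, given any map $f\colon V_p\setminus\{x\}\to V\setminus V_p$, adjoining to $T_0$ the $n_p-1$ edges $\{v,f(v)\}$ attaches each new vertex as a pendant without creating a cycle, so the result is an $x$-spanning tree. These constructions are mutually inverse, whence $\left|S_G(x)\right|$ equals the number of maps $V_p\setminus\{x\}\to V\setminus V_p$.

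Finally I would count those maps by their fibres. Writing $m_{i,j}=\left|f^{-1}(v_{i,j})\right|$ for each $v_{i,j}\in V\setminus V_p$ (so $i\neq p$), the number of maps with prescribed fibre sizes is the multinomial coefficient $\binom{n_p-1}{m_{1,1},\dots,m_{l,n_l}}$, and the $m_{i,j}$ range over all nonnegative integers with $S_p=m_{1,1}+\cdots+m_{l,n_l}=n_p-1$. Summing over all such tuples yields the claimed formula. The main obstacle is the degree-counting step that forces every non-neighbour of $x$ to be a leaf and simultaneously guarantees that every attachment map is realised by a genuine $x$-spanning tree; once that correspondence is in hand, the count is immediate.
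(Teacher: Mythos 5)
Your proof is correct and follows essentially the same route as the paper: both identify an $x$-spanning tree as the forced star from $x$ to $V\setminus V_p$ together with a choice, for each of the $n_p-1$ remaining vertices of $V_p$, of a single pendant attachment in $V\setminus V_p$, and then count these choices by a multinomial coefficient. Your version merely makes explicit (via the edge- and degree-count) the structural claim that the paper's proof asserts without detailed justification.
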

\begin{proof}
Let $v_{p,i}\in V_p$.
Since a $v_{p,i}$-tree is a rooted spanning tree containing all incident edges of the root $v_{p,i}$, 
$v_{p,i}$-tree contains the following edges:
\[
 v_{p,i}v_{1,1}, \cdots ,v_{p,i}v_{p-1,n_{p-1} }, v_{p,i}v_{p+1,1},\cdots v_{p,i}v_{l,n_{l}} .
\]
For the other edges, the vertex $v_{1,1}$ adjacent with $m_{1,1} (0\leq m_{1,1} \leq n_{p}-1)$ vertices except $v_{p,i}$, and its number is equal to
\[
\binom{n_{p}-1}{m_{1,1} }
\]
Similarly, the vertex $v_{1,2}$ adjacent with $m_{1,2} (0\leq m_{1,2} \leq n_{p}-1-m_{1,1})$ vertices except $v_{p,i}$ and the vertices that selected previously. The number is equal to
\[
\binom{n_{p}-1-m_{1,1} }{m_{1,2} } .
\]
Thus, by repeating, we obtain the following results
\begin{eqnarray*}
\left| S_G(v_{p,i}) \right|
&=&
\sum_{ S_p=n_p -1 } \binom{ n_{p}-1 }{ m_{1,1} } \binom{n_{p}-1-m_{1,1} }{ m_{1,2} }\cdots 
\binom{ n_{p}-1-m_{1,1}-\cdots -m_{l,n_{l}-1} }{ m_{l,n_l } } \\
&=&
\sum_{S_p=n_p -1}\binom{ n_{p}-1 }{ m_{1,1},\cdots ,m_{p-1,n_{p-1}} ,m_{p+1,1},\cdots ,m_{l,n_{l} } }
\end{eqnarray*}
This is similar for other $x\in V_p$.
\end{proof}

\begin{theorem}
The number of travel groupoids on the complete multipartite graph $G=K_{n_{1},\cdots ,n_{l} }$ is equal to
\begin{equation}
\prod_{p=1}^{l} \left( \sum_{S_p=n_p -1}\binom{ n_{p}-1 }{ m_{1,1},\cdots ,m_{p-1,n_{p-1}} ,m_{p+1,1},\cdots ,m_{l,n_{l} } } \right)^{n_p}
\end{equation}
Here, $S_p$ means the following sum
\[
S_p=m_{1,1}+\cdots +m_{p-1,n_{p-1} }+m_{p+1,1}+\cdots +m_{l,n_{l} }.
\]
\end{theorem}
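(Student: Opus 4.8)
The plan is to combine the counting formula for non-confusing travel groupoids with the per-vertex tree count already established in Lemma \ref{tree-number}. First I would invoke Corollary \ref{prop-diam2}, which guarantees that every travel groupoid on the complete multipartite graph $K_{n_1,\cdots,n_l}$ is non-confusing. This reduces the problem of counting all travel groupoids on $G$ to the problem of counting the non-confusing ones, which is exactly the setting handled by the spanning-tree correspondence.

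Next I would apply Corollary \ref{nc-number}, which states that the number of non-confusing travel groupoids on a finite connected graph equals $\prod_{x\in V(G)}|S_G(x)|$. Since the vertex set decomposes as $V=V_1\cup\cdots\cup V_l$, I would regroup this product according to the parts, writing it as $\prod_{p=1}^{l}\prod_{x\in V_p}|S_G(x)|$, precisely the display appearing just before Lemma \ref{tree-number}. At this point the entire burden of the proof has been shifted onto evaluating each factor $|S_G(x)|$.

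The key observation is that $|S_G(x)|$ depends only on the part $V_p$ containing $x$, not on the individual vertex. By Lemma \ref{tree-number}, each $x\in V_p$ contributes the same value $N_p=\sum_{S_p=n_p-1}\binom{n_p-1}{m_{1,1},\cdots,m_{l,n_l}}$. Because $|V_p|=n_p$, the inner product collapses to $N_p^{\,n_p}$, and taking the product over $p=1,\dots,l$ yields the asserted formula. No computation beyond Lemma \ref{tree-number} is then required; the remaining manipulation is purely formal.

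The main obstacle, and really the only point needing care, is justifying that $|S_G(x)|$ is constant across each part. This rests on the fact that any permutation of the vertices of $V_p$ fixing the remaining parts setwise is an automorphism of $K_{n_1,\cdots,n_l}$, so it carries $S_G(x)$ bijectively onto $S_G(x')$ for any $x,x'\in V_p$. This is the content of the closing remark of Lemma \ref{tree-number} (``This is similar for other $x\in V_p$''), and once it is invoked the proof is complete.
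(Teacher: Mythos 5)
Your proposal is correct and follows essentially the same route as the paper: reduce to non-confusing travel groupoids via Corollary \ref{prop-diam2}, apply Corollary \ref{nc-number}, regroup the product over the parts $V_p$, and substitute the common value of $|S_G(x)|$ from Lemma \ref{tree-number} to obtain the $n_p$-th power. Your explicit automorphism justification for why $|S_G(x)|$ is constant on each part is a small elaboration of what the paper leaves implicit, but it is not a different argument.
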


\begin{proof}
Since $V_p$ has $n_p$ elements, it follows from the Lemma \ref{tree-number}: 
\begin{eqnarray*}
\left|  \prod_{x\in V} S_{G}(x) \right|
&=& \prod_{p=1}^{l} \prod_{x\in V_p} \left| S_{G }(x) \right|  \\
&=&
\prod_{p=1}^{l} \left( \sum_{S_p=n_p -1}\binom{ n_{p}-1 }{ m_{1,1},\cdots ,m_{p-1,n_{p-1}} ,m_{p+1,1},\cdots ,m_{l,n_{l} } } \right)^{n_p}
\end{eqnarray*}
\end{proof}

\begin{theorem}
The number of simple travel groupoids on the complete multipartite graph $K_{n_1,\cdots ,n_l }$ is equal to 
\begin{equation}\label{simple-cmp}
\prod_{p=1}^{l}\prod_{k=1}^{n_p}\sum_{S_p=n_p-k} \binom{n_{p}-k}{ m_{1,1},\cdots ,m_{p-1,n_{p-1}} ,m_{p+1,1},\cdots ,m_{l,n_{l} } }
\end{equation}
Here, $S_p$ means the following sum
\[
S_p=m_{1,1}+\cdots +m_{p-1,n_{p-1} }+m_{p+1,1}+\cdots +m_{l,n_{l} }.
\]
\end{theorem}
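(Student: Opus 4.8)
The plan is to count the simple travel groupoids on $G=K_{n_1,\dots,n_l}$ through the tree correspondence of Theorem \ref{simple-tree}. By Corollary \ref{prop-diam2} every travel groupoid on a complete multipartite graph is non-confusing, so in particular every simple travel groupoid on $G$ is a simple non-confusing travel groupoid; hence Theorem \ref{simple-tree} applies and the number we seek equals $|\mathfrak{S}_G|$. As in the proof of Lemma \ref{tree-number}, for $x\in V_p$ the $x$-spanning tree $T_x$ has a rigid shape: its root $x$ is joined to every vertex of $V\setminus V_p$, and each remaining vertex $y\in V_p\setminus\{x\}$, being non-adjacent to $x$, hangs off a single vertex of $V\setminus V_p$. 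Thus $T_x$ is completely encoded by the function $\pi_x\colon V_p\setminus\{x\}\to V\setminus V_p$ sending $y$ to its parent, and conversely every such function gives a valid $x$-tree.

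Next I would translate the defining condition of $\mathfrak{S}_G$ — that for all $x,y$ there is an $x$-$y$ path inside $T_x\cap T_y$ — into a condition on the functions $\pi_x$. If $x\in V_p$ and $y\in V_q$ with $p\neq q$, then $x,y$ are adjacent and the edge $\{x,y\}$ is a root edge of both $T_x$ and $T_y$, so the required path is automatically present and no constraint arises. The essential case is $x,y\in V_p$ with $x\neq y$. Here any $x$-$y$ path in $T_x\cap T_y$ is, in particular, a path in the tree $T_x$, hence must be the unique $x$-$y$ path of $T_x$, namely $x,\pi_x(y),y$; and by the rigid shape above the only neighbour of $x$ in $T_y$ is $\pi_y(x)$, so both edges of this path lie in $T_y$ if and only if $\pi_x(y)=\pi_y(x)$. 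Therefore the $\mathfrak{S}_G$-condition is equivalent to the symmetry $\pi_x(y)=\pi_y(x)$ for all $x,y$ in a common part, and imposes nothing between distinct parts. Establishing this equivalence cleanly — in particular ruling out any other common path via the uniqueness of paths in a tree, and pinning down the neighbours of a depth-two vertex — is the step I expect to require the most care.

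Because the constraints couple only vertices lying in the same part, the count factorises as $|\mathfrak{S}_G|=\prod_{p=1}^{l}N_p$, where $N_p$ is the number of families $\{\pi_x\}_{x\in V_p}$ obeying $\pi_x(y)=\pi_y(x)$. Such a family is the same datum as a symmetric assignment $\{x,y\}\mapsto \pi_x(y)\in V\setminus V_p$ defined on the $\binom{n_p}{2}$ unordered pairs of $V_p$, so $N_p=|V\setminus V_p|^{\binom{n_p}{2}}$. To recover the stated shape I would order $V_p=\{v_{p,1},\dots,v_{p,n_p}\}$ and group the free pairs by their smaller index: the pairs with smaller index $k$ are $\{v_{p,k},v_{p,k+1}\},\dots,\{v_{p,k},v_{p,n_p}\}$, exactly $n_p-k$ of them. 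Each of these $n_p-k$ pairs is sent independently to one of the outside vertices, and by the multinomial theorem the number of such assignments is $\sum_{S_p=n_p-k}\binom{n_p-k}{m_{1,1},\dots,m_{p-1,n_{p-1}},m_{p+1,1},\dots,m_{l,n_l}}$, the summand recording how many of the $n_p-k$ pairs go to each outside vertex $v_{q,r}$. Multiplying over $k=1,\dots,n_p$ gives $N_p$ (consistent with $\sum_{k=1}^{n_p}(n_p-k)=\binom{n_p}{2}$), and multiplying over $p$ yields the formula (\ref{simple-cmp}).
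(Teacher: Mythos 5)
Your proof is correct and follows essentially the same route as the paper's: both invoke Theorem \ref{simple-tree}, observe that pairs of vertices from different parts impose no constraint because the edge $uv$ already lies in $T_u\cap T_v$, and then count the mutually compatible trees within each part, arriving at $(N-n_p)^{\binom{n_p}{2}}$ per part written in multinomial form. The paper organizes that last count sequentially (fixing $T_{v_{p,1}}$ leaves $n_p-2$ free attachments for $T_{v_{p,2}}$, and so on), which is exactly your regrouping of the $\binom{n_p}{2}$ symmetric pair-assignments by smaller index; your explicit formulation of the within-part constraint as $\pi_x(y)=\pi_y(x)$ is a cleaner statement of what the paper leaves implicit.
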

\begin{proof}
By Theorem \ref{simple-tree}, we see that the number of simple travel groupoids on $G$ is equal to the number of the following set
\[
 \mathfrak{S}_{G }=
 \left\{ \{ T_v \}\in \prod_{v\in V}S_{G }(v) |  \exists u-v\mbox{ path on } T_u\cap T_v \ (\forall u,v\in V) \right\}.
\]
Since a $u$-tree and a $v$-tree have a common edge $uv$ for $u\in V_i ,v\in V_j(i\neq j)$. 
The set $\mathfrak{S}_{G }$ is equal to 
\begin{eqnarray*}
  \bigcap_{i=1}^{l} \left\{ \{ T_v \}\in \prod_{v\in V}S_{G }(v) |  \exists v_{i,p}-v_{i,q}\mbox{ path on } T_{v_{i,p} }\cap T_{v_{i,q} } \ (\forall v_{i,p},v_{i,q}\in V_i) \right\} 
\end{eqnarray*}
Thus, it is sufficient to count the number of the $v$-trees $\{ T_v\} $ having $v_{i,p}$-$v_{i,q}$ path on $T_{v_{i,p} }\cap T_{v_{i,q} }$ for all $v_{i,p},v_{i,q} \in V_i, 1\leq i\leq l$.
First, the number of $v_{1,1}$-trees is equal to
\begin{eqnarray*}
 \sum_{S_{1}=n_{1} -1} \binom{n_{1}-1 }{m_{2,1} } \binom{n_{1}-1-m_{2,1} }{ m_{2,2} }\cdots \binom{n_{1}-1-m_{2,1}-\cdots -m_{l,n_{l}-1} }{ m_{l,n_l} } 
= \sum_{S_{1}=n_{1} -1 }\binom{ n_{1} -1 }{ m_{2,1}, \cdots ,m_{l,n_l } } 
\end{eqnarray*}
also the number of $v_{1,2}$-trees having a common path between $v_{1,1}$ and $v_{1,2}$ is equal to
\begin{eqnarray*}
\sum_{S_{1}=n_{1} -2} \binom{n_{1}-2 }{m_{2,1} } \binom{n_{1}-2-m_{2,1} }{ m_{2,2} }\cdots \binom{n_{1}-2-m_{2,1}-\cdots -m_{l,n_{l}-1} }{ m_{l,n_l} } 
=\sum_{S_{1}=n_{1} -2 }\binom{ n_{1} -2 }{ m_{2,1}, \cdots ,m_{l,n_l } } 
\end{eqnarray*}
By repeating this counting for all vertices, we obtain the relation (\ref{simple-cmp}).
\end{proof}


\bibliographystyle{line}
\bibliography{JAMS-paper}


\end{document}